\newcommand{\bsb}{\boldsymbol{b}}
\newcommand{\bse}{\boldsymbol{e}}
\newcommand{\bsh}{\boldsymbol{h}}
\newcommand{\bsk}{\boldsymbol{k}}
\newcommand{\bsl}{\boldsymbol{l}}
\newcommand{\bst}{\boldsymbol{t}}
\newcommand{\bsx}{\boldsymbol{x}}
\newcommand{\bsy}{\boldsymbol{y}}
\newcommand{\bsz}{\boldsymbol{z}}
\newcommand{\bszero}{\boldsymbol{0}}
\newcommand{\bsone}{\boldsymbol{1}}
\newcommand{\Scal}{\mathcal{S}}
\newcommand{\NN}{\mathbb{N}}
\newcommand{\RR}{\mathbb{R}}
\newcommand{\Zb}{\mathbb{Z}_b}
\newcommand{\Fb}{\mathbb{F}_b}
\newcommand{\wal}{\mathrm{wal}}
\newcommand{\Var}{\operatorname{Var}}
\newcommand{\BOX}{\operatorname{BOX}}
\newcommand{\bcoarse}{{\mathrm{C}}}
\newcommand{\busual}{{\mathrm{U}}}
\newcommand{\Vcoarse}{\sigma^2_{\mathrm{C}}}
\newcommand{\Vusual}{\sigma^2_{\mathrm{U}}}
\DeclareMathOperator*{\argmin}{arg\,min}
\newenvironment{enuroman}{\begin{enumerate}[\normalfont (i)]}{\end{enumerate}}
\title{Coarse scrambling for Sobol' and Niederreiter sequences\thanks{Submitted to the editors 2025 October 3.
\funding{The work was supported by JSPS KAKENHI Grant Number 24K06857.}}}
\author{Kosuke Suzuki\thanks{Faculty of Science, Yamagata University, 1-4-12 Kojirakawa-machi, Yamagata, 990-8560, Japan  (\email{kosuke-suzuki@sci.kj.yamagata-u.ac.jp}).}
}
\begin{document}



\date{\today}
\maketitle

\begin{abstract}
We introduce \emph{coarse scrambling}, a novel randomization for digital sequences that permutes blocks of digits in a mixed-radix representation. This construction is designed to preserve the powerful $(0,\bse,d)$-sequence property of the underlying points. For sufficiently smooth integrands, we prove that this method achieves the canonical $O(n^{-3+\epsilon})$ variance decay rate, matching that of standard Owen's scrambling. Crucially, we show that its maximal gain coefficient grows only logarithmically with dimension, $O(\log d)$, thus providing theoretical robustness against the curse of dimensionality affecting scrambled Sobol' sequences. Numerical experiments validate these findings and illustrate a practical trade-off: while Owen's scrambling is superior for integrands sensitive to low-dimensional projections, coarse scrambling is competitive for functions with low effective truncation dimension.
\end{abstract}


\begin{keyword}
Quasi-Monte Carlo, scrambling, digital net, gain coefficient, Sobol' sequence
\end{keyword}

\begin{MSCcodes}
65C05, 65D30
\end{MSCcodes}

\begin{center}
\textcolor{red}{
This is the final pre-publication version of the manuscript, made available on arXiv as Green Open Access. The final published version may differ.
}\end{center}

\section{Introduction}\label{sec:intro}

We consider numerical approximation of $d$-dimensional integrals of the form
\begin{equation}\label{eq:target_integral}
I(f) := \int_{[0,1)^d} f(\boldsymbol{x})\,d\boldsymbol{x},
\end{equation}
for Riemann-integrable functions $f:[0,1)^d\to\mathbb{R}$.
A common practical estimator is the equally weighted quadrature rule
\begin{equation}\label{eq:qmc-estimator}
I(f;P_n) := \frac{1}{n}\sum_{i=0}^{n-1} f(\boldsymbol{x}_i),
\end{equation}
where $P_n=\{\boldsymbol{x}_0,\dots,\boldsymbol{x}_{n-1}\}\subset[0,1)^d$ is a collection of sample points.
When the $\boldsymbol{x}_i$ are independent uniform samples,
\eqref{eq:qmc-estimator} reduces to the classical Monte Carlo (MC)
estimator which is unbiased and has variance
\begin{equation}\label{eq:MCvar}
\Var(I(f;P_n)) = \frac{\Var{f}}{n},
\qquad
\text{where } \Var f := I(f^2) - I(f)^2,
\end{equation}
for $f\in L^2([0,1)^d)$.
Despite its robustness, the root mean square error of the MC estimator converges at the relatively slow rate of $O(n^{-1/2})$,
motivating the use of more structured point sets.

Quasi-Monte Carlo (QMC) methods replace independent sampling by carefully
constructed low-discrepancy point sets such as lattice rules \cite{SJ94,DKP22book}, Halton sequences \cite{Ha60},
and digital nets \cite{Ni92book,DP10book}(e.g. Sobol' \cite{So67} and Niederreiter sequences \cite{Ni88}).
For integrands with bounded variation, these constructions often achieve much faster error decay
than MC as $O(n^{-1+\epsilon})$.
A key drawback of deterministic QMC, however, is the lack of a practical and reliable error estimate. Randomized QMC (RQMC) resolves this issue by introducing a randomization that preserves the low-discrepancy structure of the points while restoring the benefits of a stochastic method: the estimator becomes unbiased, and its variance can be estimated from independent replications, thus providing practical error bars.

Among randomized QMC schemes, Owen's scrambling \cite{Ow95, Ow97a} has
become a standard method for digital nets.
In Owen's scrambling, each digit of an input point's base-$b$ expansion is permuted
by an independently chosen random permutation (possibly dependent on higher-order digits).
For a deterministic point set $P_n$, applying Owen's scrambling to each point yields a randomly scrambled point set $\widetilde{P}_n$.
Owen proved a bound of the form
\begin{equation}\label{eq:gain-intro2}
\Var\bigl(I(f;\widetilde{P}_n)\bigr)\le \frac{\Gamma(P_n)}{n}\Var(f),
\end{equation}
where $\Gamma(P_n)$ is the \emph{maximal gain coefficient}
associated with the point set.
Moreover, for sufficiently smooth integrands the variance decays substantially faster as $O(n^{-3 + \epsilon})$ (see \cite{Ow97b,Ow98}),
explaining the large practical gains of scrambling over plain MC.
While recent work has also focused on alternative estimators, such as the median-of-means, to achieve robust and almost optimal order of convergence without a knowledge of integrands \cite{PO23, PO24, GSM24, Pan25},
the focus of this paper is on the variance properties of the classical mean-based estimator.

In practice, \emph{scrambled Sobol' sequences} \cite{So67,HH03,JK08} are arguably the most widely used in scientific computing and industry due to their excellent empirical performance.
A theoretical drawback, however, is that their maximal gain coefficient $\Gamma(P_n)$ can grow exponentially with dimension $d$ \cite{PO21, GS22}. In very high dimensions, this can lead to a situation where scrambled QMC performs substantially worse than plain Monte Carlo, undermining its primary motivation.

Some alternative RQMC constructions yield smaller maximal gain coefficients.
For example, scrambled Faure sequences in prime power base \(b\) (with \(d \leq b\)) satisfy \(\Gamma(P_n) \leq e \approx 2.718\) \cite{Ow97a},
but they are generally not considered efficient.
One reason is that, numerically, the benefits of scrambling appear only when \(n \gtrsim b^{d}\).
Other drawbacks are that large bases are required in high dimensions, and arithmetic in base \(b\) is slower than in base \(2\).
Scrambled Halton sequences also have small gain coefficients, growing only as \(O(\log d)\) \cite{OP24gain},
but they typically do not achieve the higher-order variance decay observed for scrambled digital nets.

This paper introduces \emph{coarse scrambling}, a randomization technique for digital sequences designed to operate on blocks of digits rather than individual digits. The motivation for this block-wise approach stems directly from our primary goal: to preserve the powerful $(0,\bse,d)$-sequence property of Sobol' and Niederreiter sequences. Since this property is defined over mixed bases of the form $(b^{e_1},\dots,b^{e_d})$,
scrambling entire blocks of digits is the natural and necessary way to maintain this structure—a feat not achieved by standard digit-wise scrambling. By preserving this crucial property, our construction achieves two critical objectives: it controls the maximal gain coefficient to grow only logarithmically with dimension, $O(\log d)$, while simultaneously retaining the $O(n^{-3+\epsilon})$ convergence rate for smooth functions.

Our main contributions are as follows:
\begin{enumerate}
\item \textbf{Coarse Scrambling with Theoretical Performance Guarantees:}
Building upon Tezuka's framework of sequences in multiple bases~\cite{Tezuka2016trac},
we adopt the notion of $(\bst,d)$-sequences in multiple bases.
Within this setting, we introduce \emph{coarse scrambling} for digital $(0,\boldsymbol{e},d)$-sequences in base $b$ (e.g., Sobol' and Niederreiter), implemented via the mixed base $(b^{e_1},\dots,b^{e_d})$, and prove that it preserves the underlying $(0,\boldsymbol{e},d)$-structure.
By extending recent gain-coefficient formulas for scrambled Halton sequences~\cite{OP24gain}, we derive an explicit expression for the gain coefficients of the coarsely scrambled digital sequences and establish two consequences:
(i) for sufficiently smooth integrands, the variance decays at the canonical rate $O(n^{-3+\epsilon})$, and
(ii) the maximal gain coefficient grows only logarithmically with the dimension, $O(\log d)$.

    \item \textbf{Empirical Validation of the Theoretical Trade-offs:} We conduct numerical experiments to validate our theoretical findings and illustrate the practical trade-off between coarse and usual scrambling. The results highlight that the superior accuracy of usual scrambling is driven by its excellent performance on low-dimensional projections. Conversely, for functions with low effective dimensionality, where the total variance is concentrated in the first few coordinates, the performance of coarse scrambling is comparable. We also present an experiment based on a 19-dimensional function unfavorable to usual scrambling for which coarse scrambling is markedly more efficient than usual scrambling, illustrating the practical impact of its much slower growth of maximal gain coefficients.
\end{enumerate}

The remainder of the paper is organized as follows.
Section~\ref{sec:prelim} recalls basic definitions and notation for digital nets, scrambling, and gain coefficients.
In Section~\ref{sec:gain} we develop the theoretical analysis of
gain coefficients for coarse scrambling.
Section~\ref{sec:comparison} compares the usual (digitwise) scrambling and coarse scrambling theoretically.
Section~\ref{sec:experiments} reports numerical experiments.

\section{Preliminaries}\label{sec:prelim}

We use the following notation.
Let $\NN$ be the set of positive integers, $\NN_0 := \NN \cup \{0\}$,
and $\Zb := \{0,1,\dots, b-1\}$.
Let $\chi(A)$ denote the characteristic function of the event $A$.
Let $1{:}d := \{1, \dots, d\}$.
For \(u \subseteq 1{:}d\), we denote the cardinality of \(u\) by \(|u|\).
Let $\bszero := (0,\dots,0)$ denote the zero vector; its dimension will be clear from the context.

\subsection{Digital Constructions and Equidistribution}\label{sec:construction}

Digital constructions in prime power base $b$ were introduced by Niederreiter.
We adopt the definition given in \cite{DP10book}.
Let $\Fb$ be a $b$-element field and
fix a bijection $\phi \colon \Zb \to \Fb$ with $\phi(0) = 0$.
If $b$ is a prime, we usually identify $\Fb$ with $\Zb$ with addition, subtraction, and multiplication defined modulo $b$.

\begin{definition}
Let $b$ be a prime power and $C_1, \dots, C_d \in \Fb^{\infty \times \infty}$.
For an integer $k \ge 0$,
denote its $b$-adic expansion by $k = \sum_{i=0}^{\infty} \kappa_i b^{i}$ with $\kappa_i \in \Zb$
and let $\vec{k} = (\phi(\kappa_0), \phi(\kappa_1), \dots)^\top$.
Then the $k$-th point $\bsx_k\in [0,1)^d$ is given by
\begin{align}\label{eq:pt-generation}
\bsx_k := (\psi(C_1\vec{k}), \dots, \psi(C_d\vec{k})),
\end{align}
where the map $\psi \colon \Fb^\infty \to [0,1]$ is defined by
\begin{equation}\label{eq:vec2val}
\psi((y_1,y_2, \dots)^\top) := \sum_{i=1}^\infty \frac{\phi^{-1}(y_{i})}{b^i}.
\end{equation}
The sequence of points $\{\bsx_0,\bsx_1 \dots\}$ constructed this way is called a \textit{digital sequence} in base $b$ with generating matrices $C_1,\dots,C_d$.
\end{definition}

To assess the quality of point sets and sequences, the notions of $(t,m,d)$-nets and $(t,d)$-sequences are widely used to establish their equidistribution property.
To introduce these objects, we first define elementary intervals.

\begin{definition}
Let $b_j, k_j \in \NN_0$ with $b_j \ge 2$ for $1 \le j \le d$.
The \textit{elementary $(k_1, \dots, k_d)$-interval in base $(b_1, \dots, b_d)$}
is an interval of the form
\[
\prod_{j=1}^d \left[\frac{a_j}{b_j^{k_j}}, \frac{a_j+1}{b_j^{k_j}}\right) \quad \text{with $a_j \in \NN_0$,\, $0 \le a_j < b_j^{k_j}$}.
\]
\end{definition}

\begin{definition}
Let $m,b,t \in \NN_0$ with $m \ge 1$ and $b \ge 2$.
Let $P = \{\bsx_0, \bsx_1, \dots, \bsx_{b^m-1}\} \subset [0,1)^d$ be a set of $b^m$ points.
We call $P$ a \textit{$(t,m,d)$-net in base $b$} if
every elementary $(k_1, \dots, k_d)$-interval in base $(b,\dots,b)$ with $k_1 + \cdots + k_d = m-t$
contains exactly $b^t$ points of $P$.
A sequence $\Scal = \{\bsx_0, \bsx_1, \dots\} \subset [0,1)^d$ is called a \textit{$(t,d)$-sequence in base $b$}
if, for any $m,r \in \NN_0$ with $m \ge t$,
the point set $\{\bsx_{rb^m}, \dots, \bsx_{(r+1)b^m-1}\}$ is a $(t,m,d)$-net.
\end{definition}

\subsection{Generalized Niederreiter Sequences}

A well-known and important class of digital sequences is given by the Sobol' and (generalized) Niederreiter sequences.
The Sobol' sequence, defined in \cite{So67}, can be regarded as a generalized Niederreiter sequence in base \(2\)
whose base polynomials are chosen to be primitive (see \cite{FL16} for details).
The Niederreiter sequence was introduced in \cite{Ni88}
and later generalized by Tezuka in \cite{Te93},
by allowing more general polynomials $y^{(j)}_k(x)$ (see Definition~2.4).

\begin{definition}
Let $b$ be a prime power and let the base polynomial $p_1(x),p_2(x),\dots,p_d(x)$
be monic polynomials in $\mathbb{F}_b[x]$ which are pair-wise coprime. Let $e_j := \deg(p_j)$.
For $k \in \mathbb{N}$ and $1 \le j \le d$, let $k_j=\lfloor (k-1)/e_j \rfloor$ and let
$y^{(j)}_{k}(x)\in \mathbb{F}_b[x]$ with the restriction that for each $q\in\mathbb{N}$,
the residue polynomials
$\{
y^{(j)}_{k}(x)\bmod p_j(x) \mid (q-1)e_j \le k-1 < q e_j
\}$
are linearly independent over $\mathbb{F}_b$.
Consider the Laurent series expansion
\[
\frac{y^{(j)}_{k}(x)}{(p_j(x))^{k_j+1}}
=
\sum_{r=1}^{\infty} a^{(j)}(k_j+1,k,r)\,x^{-r}\in \mathbb{F}_b((x^{-1})).
\]
The \textit{generalized Niederreiter sequence} is
defined as a digital sequence whose generating matrices are
$C_j=(c_{k,r}^{(j)})_{k,r \in \NN}$ for $1 \le j \le d$,
where each element is given by
\begin{equation*}\label{eq:general-Nied-seq}
c_{k,r}^{(j)} = a^{(j)}\left(k_j+1, k, r\right).
\end{equation*}
\end{definition}

\begin{definition}
We use the term \textit{full Niederreiter sequence} to refer to the generalized Niederreiter sequence in which the base polynomials $p_1(x), p_2(x), \dots$ are taken to be all the monic irreducible polynomials over $\Fb$, ordered by increasing degree.
\end{definition}

Generalized Niederreiter sequences are important examples of $(t,d)$-sequences, as stated in the following theorem \cite[Section~8.1]{DP10book}.
\begin{theorem}\label{thm:t-val-nied}
Any generalized Niederreiter sequence in prime power base $b$ with the base polynomials $p_1(x),\dots,p_d(x)$
is a $(t,d)$-sequence with
$t = \sum_{j=1}^d (\deg(p_j) - 1)$.
\end{theorem}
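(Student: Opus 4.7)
This theorem is classical, due to Niederreiter and Tezuka; the plan is to combine the standard row-duality for digital nets with a degree-counting argument in $\Fb((x^{-1}))$. The first step is the reduction via duality: the sequence is a $(t,d)$-sequence in base $b$ iff for every $m\ge t$ and every tuple $(d_1,\dots,d_d)\in\NN_0^d$ with $d_1+\cdots+d_d\le m-t$, the rows $\{\vec{c}^{(j)}_{k,\cdot}:1\le j\le d,\ 1\le k\le d_j\}$ truncated to the first $m$ columns are linearly independent over $\Fb$. The block condition in the $(t,d)$-sequence definition reduces to this matrix statement because, for $k=rb^m+\ell$ with $0\le\ell<b^m$, the first $m$ output digits of coordinate $j$ depend on $\ell$ only through the top-left $m\times m$ block of $C_j$, while the tail of $\vec k$ contributes an $r$-dependent but $\ell$-independent shift.

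Suppose $\sum_{j,k}\lambda_{j,k}\,c^{(j)}_{k,r}=0$ for $r=1,\dots,m$. By the defining Laurent expansion, this is equivalent to
\[
L(x) \;:=\; \sum_{j=1}^d\sum_{k=1}^{d_j}\lambda_{j,k}\,\frac{y_k^{(j)}(x)}{p_j(x)^{k_j+1}}
\]
having vanishing coefficients on $x^{-1},\dots,x^{-m}$. Set $T_j:=\lceil d_j/e_j\rceil$, $D(x):=\prod_j p_j(x)^{T_j}$, and $N(x):=L(x)D(x)$. Then $N$ is a polynomial with $\deg N\le\deg D-m-1$, and the decisive estimate
\[
\sum_{j=1}^d T_j e_j \;\le\; \sum_{j=1}^d(d_j+e_j-1) \;\le\; (m-t)+t \;=\; m
\]
forces $\deg N\le-1$, hence $N\equiv 0$.

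Reducing $N\equiv 0$ modulo $p_j(x)^{T_j}$ and using pairwise coprimality of the $p_i$'s isolates the $j$-th summand, giving $A_j(x):=\sum_{k=1}^{d_j}\lambda_{j,k}\,y_k^{(j)}(x)\,p_j(x)^{T_j-k_j-1}\equiv 0\pmod{p_j^{T_j}}$; since $\deg A_j<T_je_j$, in fact $A_j\equiv 0$. Grouping the summands by the value $u:=k_j$, write $A_j=\sum_{u=0}^{T_j-1}p_j^{T_j-u-1}B_u$ with $B_u:=\sum_{k:\,k_j=u}\lambda_{j,k}y_k^{(j)}$ of degree less than $e_j$; successive reduction modulo $p_j$ peels off $B_u=0$ for every $u$, and the defining linear independence of $\{y_k^{(j)}\bmod p_j\}_{k:\,k_j=u}$ over $\Fb$ then forces $\lambda_{j,k}=0$. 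This establishes the required linear independence, so the duality yields the $(t,d)$-sequence property.

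The main obstacle, and the reason the bound $t=\sum_j(e_j-1)$ is tight, is precisely the degree inequality $\sum_j T_j e_j\le m$: the ceiling rounding in $T_j$ costs at most $e_j-1$ per coordinate, exactly the budget absorbed into $t$. The remaining steps are routine polynomial algebra, and the same argument covers Sobol' sequences as the case $b=2$ with primitive base polynomials.
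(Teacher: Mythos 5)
The paper states Theorem~\ref{thm:t-val-nied} without proof, citing \cite[Section~8.1]{DP10book}, so there is no in-paper argument to compare against. Your proof follows the standard route in that reference (and in Niederreiter's and Tezuka's original work): reduce via row-duality to a linear-independence condition, clear denominators to produce a polynomial $N$, use a degree bound to force $N\equiv 0$, then isolate each coordinate by reducing modulo $p_j^{T_j}$ and peel off one level at a time modulo $p_j$. The overall structure is correct and the degree estimate $\sum_j T_j e_j \le \sum_j (d_j + e_j - 1) \le m$ is exactly the right accounting for the $t$-value.

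Two small points of imprecision are worth flagging. First, you assert $\deg B_u < e_j$, but the paper's definition of the generalized Niederreiter sequence only constrains the residues $y_k^{(j)} \bmod p_j$ (not their degrees), so one should either reduce $y_k^{(j)}$ modulo $p_j^{k_j+1}$ at the outset (which is without loss of generality, since the Laurent coefficients for $r\ge 1$ depend only on this residue) or phrase the peeling argument purely in terms of residues: reducing $A_j$ successively modulo $p_j$ gives $B_u \equiv 0 \pmod{p_j}$ for $u = T_j-1, T_j-2, \dots$, and the linear independence of $\{y_k^{(j)} \bmod p_j : k_j = u\}$ then forces $\lambda_{j,k}=0$. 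Second, the step ``$L$ has vanishing coefficients on $x^{-1},\dots,x^{-m}$ implies $\deg N\le \deg D-m-1$'' silently uses that $L$ has no polynomial part, which again follows from the degree reduction just mentioned. Neither gap is structural; once these normalizations are made explicit the argument goes through cleanly and matches the proof in the cited source.
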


As the theorem indicates, the quality of the generalized Niederreiter sequence depends on the degrees of the base polynomials. To achieve a small $t$-value, one must choose base polynomials with low degrees.
Table~\ref{table:degree} summarizes the number of available irreducible and primitive polynomials over $\mathbb{F}_2$ for a given degree.
For example, when constructing a Sobol' sequence (where primitive polynomials are typically used), if one wishes to keep the degrees of the base polynomials at most $7$, the maximum possible dimension is $d = 1 + \sum_{n=1}^{7} P_2(n) = 37$, where $P_2(n)$ denotes the number of primitive polynomials over $\mathbb{F}_2$ of degree $n$, and the first dimension is chosen separately with $p_1(x)=x$.

\begin{table}[ht]
\centering
\begin{tabular}{c|c*{14}{r}}
$n$ & 1 & 2 & 3 & 4 & 5 & 6 & 7 & 8 & 9 & 10 & 11 & 12 & 13 & 14 \\
\hline
irreducible &
  2 & 1 & 2 & 3 & 6 & 9 & 18 & 30 & 56 & 99 & 186 & 335 & 630 & 1161 \\
primitive &
  1 & 1 & 2 & 2 & 6 & 6 & 18 & 16 & 48 & 60 & 176 & 144 & 630 & 756 \\
\end{tabular}
\caption{Number of monic irreducible and primitive polynomials over $\mathbb{F}_2$.}
\label{table:degree}
\end{table}

\subsection{Equidistribution and Further Generalization}

A smaller $t$-value is preferable, since it indicates that the point set satisfies an equidistribution property with respect to finer partitions of $[0,1)^d$.
However, the ideal case $t=0$ is often unattainable in high dimensions.
For instance, if $b$ is prime, a $(0,b+1)$-sequence in base $b$ does not exist.
To overcome this limitation, generalized notions of
$(t,m,\bse,d)$-nets and
$(t,\bse,d)$-sequences were introduced \cite{tezuka2013discrepancy}. In these notions,
the class of elementary intervals under consideration is restricted,
which in turn allows for smaller $t$-values.

\begin{definition}
Let $m,b,t \in \NN_0$ with $m \ge 1$ and $b \ge 2$ and $e_j \in \NN$ for $1 \le j \le d$.
Let $P = \{\bsx_0, \bsx_1, \dots, \bsx_{b^m-1}\} \subset [0,1)^d$ be a set of $b^m$ points.
We call $P$ a \textit{$(t,m,\bse,d)$-net in base $b$} if
every elementary $(k_1, \dots, k_d)$-interval in base $(b^{e_1},\dots,b^{e_d})$ with $e_1k_1 + \cdots + e_dk_d = m-t$
contains exactly $b^t$ points of $P$.
A sequence $\Scal = \{\bsx_0, \bsx_1, \dots\} \subset [0,1)^d$ is called a \textit{$(t,\bse,d)$-sequence in base $b$}
if, for any $m,r \in \NN_0$ with $m \ge t$,
the point set $\{\bsx_{rb^m}, \dots, \bsx_{(r+1)b^m-1}\}$ is a $(t,m,\bse,d)$-net.
\end{definition}

Under this generalized framework, generalized Niederreiter sequences exhibit an ideal equidistribution property, as shown by Tezuka \cite[Theorem~1]{tezuka2013discrepancy}.
\begin{theorem}\label{thm:Nied_is_0ed}
Let $\Scal$ be a generalized Niederreiter sequence in prime power base $b$ with the base polynomials $p_1(x),\dots,p_d(x)$.
Let $\bse = (\deg(p_1), \dots, \deg(p_d))$.
Then $\Scal$ is a $(0,\bse,d)$-sequence in base $b$.
\end{theorem}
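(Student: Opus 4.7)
The plan is to reduce the $(0,\bse,d)$-sequence property to a purely linear-algebraic statement about the generating matrices $C_1,\dots,C_d$, and then extract the required independence via a Laurent-series argument in the same spirit as Niederreiter's proof of Theorem~\ref{thm:t-val-nied}. First I would show that the equidistribution of the block $\{\bsx_{rb^m},\dots,\bsx_{(r+1)b^m-1}\}$ reduces to a net condition on the digital matrices alone. Writing the index as $rb^m+k$ with $0\le k<b^m$, its base-$b$ digit vector is the digits of $k$ in positions $0,\dots,m-1$ followed by the digits of $r$ in positions $\ge m$; consequently, the contribution of $r$ to the first $e_jk_j$ base-$b$ digits of the $j$-th coordinate of $\bsx_{rb^m+k}$ is a constant offset independent of $k$. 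So $(0,\bse,m,d)$-equidistribution is equivalent to the invertibility, for every $(k_1,\dots,k_d)$ with $e_1k_1+\dots+e_dk_d\le m$, of the $m\times m$ matrix obtained by stacking the first $e_jk_j$ rows and first $m$ columns of each $C_j$.

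Next I would assume a vanishing linear combination of these rows, $\sum_{j,k}\alpha_k^{(j)} c_{k,r}^{(j)}=0$ for $r=1,\dots,m$, and repackage it as a rational-function identity. By definition $c_{k,r}^{(j)}$ is the coefficient of $x^{-r}$ in the Laurent expansion of $y_k^{(j)}(x)/p_j(x)^{k_j(k)+1}$ with $k_j(k)=\lfloor(k-1)/e_j\rfloor$. Grouping rows in blocks of $e_j$ yields
\[
G(x)=\sum_{j=1}^d\sum_{t=1}^{k_j}\frac{Y_{j,t}(x)}{p_j(x)^t},\qquad Y_{j,t}(x)=\sum_{\ell=1}^{e_j}\alpha_{(t-1)e_j+\ell}^{(j)}\,y_{(t-1)e_j+\ell}^{(j)}(x),
\]
and the vanishing at $r=1,\dots,m$ says $G(x)=O(x^{-(m+1)})$ at infinity. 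Writing $G=N/P$ with $P=\prod_j p_j^{k_j}$, one has $\deg P=\sum_j e_jk_j\le m$ and, after replacing each $y_k^{(j)}$ by its reduction modulo $p_j^t$ (which leaves the Laurent tail unchanged), $\deg N<\deg P$. A proper rational function whose Laurent expansion starts no earlier than $x^{-(m+1)}$ must satisfy $\deg N-\deg P\le -(m+1)$, forcing $N\equiv 0$.

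To conclude, I would exploit the pairwise coprimality of $p_1,\dots,p_d$: the identity $N\equiv 0$, together with the Chinese Remainder Theorem, yields $\sum_{t=1}^{k_j} Y_{j,t}(x)\,p_j(x)^{k_j-t}\equiv 0\pmod{p_j(x)^{k_j}}$ for every $j$. Reducing modulo $p_j$ isolates $Y_{j,k_j}\equiv 0\pmod{p_j}$, and the defining linear independence of the residues $y_k^{(j)}\bmod p_j$ for $(k_j-1)e_j<k\le k_je_j$ forces the corresponding $\alpha$'s to vanish; iterating the reduction for $t=k_j-1,\dots,1$ kills the remaining coefficients. The step I expect to be the real obstacle is the bookkeeping in the middle paragraph, namely justifying the normalization $\deg y_k^{(j)}<e_jt$ from the definition (via reduction modulo $p_j^t$) and verifying that the Laurent-series manipulations commute with this reduction. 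The shift argument in the first paragraph also needs some care about the truncation convention implicit in $\psi$, but once the matrix reformulation is in place the remainder is a standard Chinese-remainder calculation analogous to the one used for Theorem~\ref{thm:t-val-nied}.
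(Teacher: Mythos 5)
The paper does not prove this theorem; it cites it directly from Tezuka \cite[Theorem~1]{tezuka2013discrepancy}, so there is no ``paper proof'' to compare against. On its own merits, your argument is correct and is, in spirit, the standard Laurent-series proof of this and related Niederreiter-type results.

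Your three-step plan is sound. Step one, reducing the $(0,\bse,d)$-sequence property to linear independence of the rows $\{c^{(j)}_{k,\cdot} : 1\le k\le e_jk_j,\ 1\le j\le d\}$ in $\Fb^m$ for every $(k_1,\dots,k_d)$ with $\sum_j e_jk_j\le m$, is the usual digital-net characterization; your observation that the $r$-dependence only contributes a constant additive (in $\Fb$) offset to each coordinate digit string, and hence may be discarded, is correct. One small wording issue: when $\sum_j e_jk_j<m$ the stacked matrix is not $m\times m$, so you should ask for full row rank rather than ``invertibility''; the subsequent argument in fact proves exactly that (linear independence of the rows), so nothing breaks. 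Step two is correct and your own worry about it is already resolved by the reduction you describe: replacing $y^{(j)}_k$ by its remainder modulo $p_j^t$ changes $y^{(j)}_k/p_j^t$ by a polynomial and thus leaves the principal part of its Laurent series, hence the rows of $C_j$, unchanged, while simultaneously giving $\deg Y_{j,t}<te_j$, so $\deg N<\deg P\le m$. Together with $G=O(x^{-(m+1)})$ this forces $N=0$, because a nonzero $N$ with $\deg N<\deg P$ has a Laurent expansion whose leading nonzero term sits exactly at $x^{\deg N-\deg P}\ge x^{-m}$. Step three, isolating $j$ by pairwise coprimality, dividing out by $p_j$, and using the defining linear independence of $y^{(j)}_k\bmod p_j$ within each block of length $e_j$, is the same downward-induction-in-$t$ argument used in Niederreiter's proof of the $t$-value bound, and it closes the argument without difficulty. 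In short: a correct, standard proof; the only substantive point of care is the one you already flagged, namely the $\deg N<\deg P$ normalization, and your mod-$p_j^t$ reduction handles it.
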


Tezuka extended the notion of $(t,\bse,d)$-sequences in a single base $b$ to $(\bst,\bse,d)$-sequences in multiple bases $\bsb=(b_1,\dots,b_d)$~\cite{Tezuka2016trac}.
In this paper, we restrict attention to the special case $\bse=\mathbf{1}:=(1,\dots,1)$.
Accordingly, we introduce the shorthand ``$(\bst,d)$-sequence in multiple bases $\bsb$'' for a $(\bst,\mathbf{1},d)$-sequence in base $\bsb$.
Note that $(t,d)$-sequence in base $b$
is a $((t_1,\dots,t_d),d)$-sequence in base $(b,\dots,b)$ if $t_1 + \dots + t_d = t$.
\begin{definition}\label{def:t-multibase-seq}
Let $\Scal=(\bsx_n)_{n\ge 0}\subset[0,1)^d$ and let $\bsb=(b_1,\dots,b_d)$ with $b_j\ge 2$.
Let $\bst=(t_1,\dots,t_d)\in\NN_0^d$.
We call $\Scal$ a $(\bst,d)$-sequence in (multiple) base $\bsb$ if, for any
$r\in\NN_0$ and any $\bsk=(k_1,\dots,k_d)\in\NN_0^d$ with $k_j\ge t_j$,
every elementary $(k_1-t_1,\dots,k_d-t_d)$-interval in base $\bsb$ contains exactly
$\prod_{j=1}^d b_j^{t_j}$ points from 
$\{\bsx_{rB},\bsx_{rB+1},\dots,\bsx_{(r+1)B-1}\}$, where
$B:=\prod_{j=1}^d b_j^{k_j}$.
\end{definition}

We mainly focus on the case $\bst=\bszero:=(0,\dots,0)$.
As Tezuka pointed out~\cite{Tezuka2016trac}, this framework includes well-known sequences as special cases.
For example, the Halton sequence is a $(\bszero,d)$-sequence in base $(p_1,\dots,p_d)$, where $p_j$ denotes the $j$th prime number.
Any $(0,d)$-sequence in base $b$ (e.g., Faure sequences) is a $(\bszero,d)$-sequence in base $(b,\dots,b)$.
Furthermore, the notion of a $(0,\bse,d)$-sequence in base $b$ is identical to that of a $(\bszero,d)$-sequence in base $(b^{e_1},\dots,b^{e_d})$.
Therefore, Theorem~\ref{thm:Nied_is_0ed} implies that generalized Niederreiter sequences are $(\bszero,d)$-sequences in base $(b^{e_1},\dots,b^{e_d})$, where $e_j=\deg(p_j)$.

\subsection{Scrambling}\label{subsec:scramble}

Scrambling is a procedure that applies a random permutation to the digits of a point's coordinates,
aiming to preserve the equidistribution structure
of the underlying point set while enabling unbiased estimation and variance assessment.

\begin{definition}
Let $x = \xi_1 b^{-1}+\xi_2b^{-2}+\xi_3b^{-3}+\cdots$ with $\xi_k \in \Zb$. A \textit{nested scramble} in base $b$ is a random map $\sigma(x) = y$ that randomizes the digits $\xi_k$ to obtain $y = \eta_1 b^{-1}+\eta_2b^{-2}+\eta_3b^{-3}+\cdots$, where the digits $\eta_i$ are determined via permutations as
\[
\eta_1 = \pi_{}(\xi_1), \qquad
\eta_k = \pi_{ \xi_1 \xi_2 \dots \xi_{k-1}}(\xi_k) \quad (k \ge 2),
\]
where all $\pi$ and $\pi_{\xi_1 \xi_2 \dots \xi_{k-1}}$ are chosen from permutations on $\Zb$ and are mutually independent. For a point $\bsx = (x_1, \dots, x_d) \in [0,1)^d$, a nested scramble in base $(b_1, \dots, b_d)$ is a map $\sigma = (\sigma_1, \dots, \sigma_d)$ where each $\sigma_j$ is an independent random scramble in base $b_j$ applied to $x_j$.
\end{definition}

For a scramble to be effective in RQMC, it should satisfy certain uniformity properties.
These uniformity properties are crucial for variance analysis.

\begin{definition}
\begin{enuroman}
\item A random permutation $\pi$ on $\Zb$ has \textit{single-fold uniformity} if, for any $x \in \Zb$, $\pi(x)$ is uniformly distributed on $\Zb$.
\item A random permutation $\pi$ on $\Zb$ has \textit{two-fold uniformity} if, for any $x\neq y \in \Zb$, the pair $(\pi(x),\pi(y))$ is uniformly distributed over the $b(b-1)$-element set $\{(c,d) \in \Zb \times \Zb \mid c \neq d \}$.
\item A nested scramble has \textit{one-fold (resp.\ two-fold) uniformity} if all the permutations that constitute the scramble have one-fold (resp. two-fold) uniformity.
\end{enuroman}
\end{definition}

Note that the two-fold uniformity implies the one-fold uniformity.
Several methods exist to implement a scramble with these properties. Owen \cite{Ow95} introduced \textit{fully nested scrambling}, where the permutations are chosen independently and uniformly from the $b!$ possible permutations on $\mathbb{Z}_b$. This method is a conceptual ideal but can be costly both in storing the permutations and applying them. As a fast and practical alternative, the \textit{affine matrix scramble} has been proposed \cite{Ma98,Te94,Ow03}.
As in Section~\ref{sec:construction},
we fix a bijection $\phi \colon \Zb \to \Fb$ with $\phi(0)=0$.
Further, for a real number $x \in [0,1)$ with the base-$b$ expansion
$x = \xi_1 b^{-1} + \xi_2 b^{-2} + \cdots$,
where $\xi_k \in \{0, 1, \dots, b-1\}$, we denote the corresponding vector of its digits as
\begin{equation}\label{eq:real2vec}
\vec{x} := (\phi(\xi_{1}),\phi(\xi_{2}), \dots)^\top.
\end{equation}

\begin{definition}[Affine matrix scramble]
For $1 \leq j \leq d$, let $\Delta_j \in \mathbb{F}_b^{\infty}$ and $M_j \in \mathbb{F}_b^{\infty \times \infty}$. The families $\{\Delta_j\}$ and $\{M_j\}$ are independent, with $\Delta_j$ having i.i.d. uniform entries in $\mathbb{F}_b$. Each $M_j$ is a random lower triangular matrix with diagonal entries drawn uniformly from $\mathbb{F}_b \setminus \{0\}$ and sub-diagonal entries drawn uniformly from $\mathbb{F}_b$.
Let $\bsx = (x_1, \dots, x_d) \in [0,1)^d$ and let $\vec{x}_j$ be the digit vector of $x_j$ defined in \eqref{eq:real2vec}.
Then the randomly scrambled point of $\bsx$, denoted $\bsx'$, is given by
\[
\bsx' := \bigl(\psi(M_1 \vec{x}_1 + \Delta_1), \dots, \psi(M_d \vec{x}_d + \Delta_d)\bigr),
\]
where $\psi$ is defined in \eqref{eq:vec2val}.
\end{definition}

Both fully nested scrambling and affine matrix scrambling possess two-fold uniformity. A key consequence of such scrambles is that a single scrambled point $\sigma(\bsx)$ is uniformly distributed on $[0,1)^d$. Furthermore, scrambling preserves the equidistribution property of the entire sequence. The proof proceeds similarly to the case of $(t,d)$-sequences \cite{Ow95} and is therefore omitted.

\begin{proposition}
Let $\Scal = (\bsx_0, \bsx_1, \cdots)$ be a $(\bst,d)$-sequence in base $\bsb$, and let $\sigma$ be a nested scramble in base $\bsb$. Then the scrambled sequence $\Scal' := (\sigma(\bsx_0), \sigma(\bsx_1), \cdots)$ is a $(\bst,d)$-sequence in base $\bsb$ with probability $1$.
\end{proposition}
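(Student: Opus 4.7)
The plan is to reduce the proposition to a deterministic fact about the scrambling map $\sigma$: once the random permutations defining $\sigma$ are fixed, the preimage $\sigma^{-1}(E)$ of every elementary $(k_1,\dots,k_d)$-interval $E$ in base $\bsb$ is again an elementary $(k_1,\dots,k_d)$-interval in base $\bsb$, with the same tuple of block sizes $(k_1,\dots,k_d)$ but generally different offsets. Once this is established, the required equidistribution of the scrambled block follows immediately by applying the equidistribution hypothesis of $\Scal$ to $\sigma^{-1}(E)$.

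To spell this out, I would fix $r,k_1,\dots,k_d \in \NN_0$ and an arbitrary elementary $(k_1,\dots,k_d)$-interval $E = \prod_{j=1}^d [a_j/b_j^{k_j},(a_j+1)/b_j^{k_j})$, and set $B := b_1^{k_1}\cdots b_d^{k_d}$. Membership $x_j \in [a_j/b_j^{k_j},(a_j+1)/b_j^{k_j})$ depends only on the first $k_j$ base-$b_j$ digits of $x_j$. By the definition of the nested scramble in base $b_j$, the first $k_j$ digits $(\eta_1,\dots,\eta_{k_j})$ of $\sigma_j(x_j)$ are determined from $(\xi_1,\dots,\xi_{k_j})$ using only the finite collection of permutations $\pi^{(j)}_\bullet, \pi^{(j)}_{\bullet\xi_1},\dots,\pi^{(j)}_{\bullet\xi_1\cdots\xi_{k_j-1}}$, and for every realization of these permutations the resulting map $\mathbb{Z}_{b_j}^{k_j}\to\mathbb{Z}_{b_j}^{k_j}$ is a bijection—invertible digit by digit via $\xi_\ell = (\pi^{(j)}_{\bullet\xi_1\cdots\xi_{\ell-1}})^{-1}(\eta_\ell)$. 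Consequently there exist unique offsets $a'_j$ such that $\sigma^{-1}(E) = \prod_{j=1}^d [a'_j/b_j^{k_j},(a'_j+1)/b_j^{k_j})$ up to a measure-zero set of boundary points, and $\sigma(\bsx_i)\in E$ iff $\bsx_i \in \sigma^{-1}(E)$. Applying the equidistribution of $\Scal$ in base $\bsb$ to $\sigma^{-1}(E)$ shows that exactly one of $\bsx_{rB},\dots,\bsx_{(r+1)B-1}$ lies there, hence exactly one of the scrambled points lies in $E$. A countable intersection over the tuples $(r,k_1,\dots,k_d,a_1,\dots,a_d)$ yields a single probability-one event on which the defining property of equidistribution holds for every elementary interval simultaneously.

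The main obstacle I anticipate is the familiar measure-zero ambiguity of $b_j$-adic expansions: a coordinate $\sigma_j(x_{i,j})$ that happens to land on a boundary point $a_j/b_j^{k_j}$ has two distinct digit expansions, so the equivalence $\sigma(\bsx_i)\in E \Leftrightarrow \bsx_i \in \sigma^{-1}(E)$ could fail on a null set. For any fixed $\bsx_i$, however, the probability (over the random permutations) that $\sigma_j(x_{i,j})$ sits on such a boundary is zero, since it would force infinitely many of the independent permutations in the nested ensemble to simultaneously map their inputs to $0$ (or to $b_j-1$). Absorbing this into the ``with probability $1$'' qualifier via a countable union over the deterministic indices $i$ and the countably many boundary points completes the argument; this is precisely the point at which a purely deterministic statement fails and the probability-one qualifier is actually required.
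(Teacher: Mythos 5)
Your proof is correct and follows the same line of reasoning the paper defers to (Owen's argument for scrambled $(t,d)$-sequences in \cite{Ow95}, adapted to mixed bases): a nested scramble in base $\bsb$ acts, at each finite resolution $(k_1,\dots,k_d)$, as a deterministic bijection on the $b_1^{k_1}\cdots b_d^{k_d}$ elementary intervals of that shape, so $\sigma^{-1}(E)$ is again an elementary interval and the equidistribution count of $\Scal$ transfers directly. The paper omits its own proof, and your identification of the $b_j$-adic boundary ambiguity as the sole reason the ``with probability $1$'' qualifier is needed is accurate and correctly resolved by the countable-union argument.
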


For sequences like the Sobol' and Niederreiter sequences, which are $(0,\bse,d)$-sequences in base $b$, their strong equidistribution property is defined with respect to the mixed base $(b^{e_1},\dots,b^{e_d})$. To preserve this specific structure, it is natural to apply scrambling in this mixed base. We refer to this as a \textit{coarse scramble}, 
a central concept of this paper, in contrast to the usual fine or digitwise scramble in base $(b,\dots,b)$.
This can be achieved by fully nested coarse scramble or by affine matrix scramble over a field $\mathbb{F}_{b^{e_j}}$ with mixed bases.
By a \emph{fully nested coarse scramble} in base $\bsb = (b^{e_1},\dots,b^{e_d})$, we mean a nested scramble in base $\bsb$ (Definition~2.10) in which, for each coordinate $j$, the permutations on $\mathbb{Z}_{b^{e_j}}$ are chosen independently and uniformly from the $(b^{e_j})!$ possible permutations (i.e., the mixed-base analogue of Owen's fully nested scrambling).
In this paper, we propose the block affine matrix scramble, which generalizes the affine matrix scramble to operate on blocks of digits and directly corresponds to performing operations in the mixed base.
\begin{definition}[Block affine matrix scramble]
For a prime power $b$, the \textit{block affine matrix scramble} in base $(b^{e_1}, \dots, b^{e_d})$ is defined as follows. For $1 \le j \le d$, let $\Delta_j \in \mathbb{F}_b^{\infty}$ and $M_j \in \mathbb{F}_b^{\infty \times \infty}$ be independent random vectors and matrices.
The entries of each $\Delta_j$ are i.i.d.\ and uniformly distributed over $\mathbb{F}_b$.
Each $M_j$ is a block lower-triangular matrix with block size $e_j$, where diagonal blocks are drawn uniformly from the nonsingular matrices in $\mathbb{F}_b^{e_j \times e_j}$, and off-diagonal blocks are drawn uniformly from all matrices in $\mathbb{F}_b^{e_j \times e_j}$. The scrambled point $\bsx'$ is then computed as
\[
\bsx' := \bigl(\psi(M_1 \vec{x}_1 + \Delta_1), \dots, \psi(M_d \vec{x}_d + \Delta_d)\bigr).
\]
Note that this recovers the usual affine matrix scramble when $e_j=1$ for all $j$.
\end{definition}

This generalized scramble retains the essential properties of the original. The following result is an analogue of the standard case \cite{Ma98, Ow03}, and its proof is omitted.

\begin{lemma}
For a prime power $b$, the block affine matrix scramble in base $\bsb = (b^{e_1}, \dots, b^{e_d})$ is a nested scramble in base $\bsb$ and has two-fold uniformity.
\end{lemma}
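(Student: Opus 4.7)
The plan is to analyze the block affine matrix scramble coordinate by coordinate, viewing each base-$b^{e_j}$ digit of $x_j$ as a vector in $\Fb^{e_j}$ via the bijection induced by $\phi$. Fix a coordinate $j$ and partition $\vec{x}_j$ into blocks $\vec{\xi}_1^{(j)}, \vec{\xi}_2^{(j)}, \dots$ of length $e_j$; partition $\Delta_j$ and $M_j$ analogously, and let $M_j^{(k,l)}$ denote the $(k,l)$-block of $M_j$ (of size $e_j \times e_j$). Because $M_j$ is block lower-triangular with block size $e_j$, the $k$-th output block equals
\[
\vec{\eta}_k^{(j)} \;=\; M_j^{(k,k)} \vec{\xi}_k^{(j)} \;+\; \Bigl( \sum_{l=1}^{k-1} M_j^{(k,l)} \vec{\xi}_l^{(j)} + \Delta_j^{(k)} \Bigr).
\]
Since $M_j^{(k,k)}$ is nonsingular, this is an affine bijection of $\Fb^{e_j} \cong \mathbb{Z}_{b^{e_j}}$, hence corresponds to a random permutation of the $k$-th base-$b^{e_j}$ digit depending on the past only through $\xi_1^{(j)}, \dots, \xi_{k-1}^{(j)}$. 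Combined with the independence of the families $(M_j,\Delta_j)$ across $j$, this yields the required nested scramble structure in base $\bsb = (b^{e_1}, \dots, b^{e_d})$.

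For two-fold uniformity, I will fix $j$, $k$, a history $(\vec{\xi}_1^{(j)}, \dots, \vec{\xi}_{k-1}^{(j)})$, and two distinct values $u \neq v \in \Fb^{e_j}$. Setting
\[
c_k \;:=\; \sum_{l=1}^{k-1} M_j^{(k,l)} \vec{\xi}_l^{(j)} + \Delta_j^{(k)},
\]
I must show that $\bigl(M_j^{(k,k)} u + c_k,\; M_j^{(k,k)} v + c_k\bigr)$ is uniform on the set of ordered distinct pairs in $\Fb^{e_j} \times \Fb^{e_j}$. Because $\Delta_j^{(k)}$ is uniform on $\Fb^{e_j}$ and independent of everything else, $c_k$ is uniform on $\Fb^{e_j}$ and independent of $M_j^{(k,k)}$. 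Thus $M_j^{(k,k)} u + c_k$ is uniform on $\Fb^{e_j}$, and conditionally on its value being $a$, the second coordinate equals $a + M_j^{(k,k)}(v - u)$. It therefore suffices to prove that $M_j^{(k,k)}(v - u)$ is uniform on $\Fb^{e_j} \setminus \{0\}$.

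The one nontrivial step, and what I expect to be the main obstacle, is precisely this uniformity claim: if $M$ is uniform on the group $\mathrm{GL}_{e_j}(\Fb)$ of nonsingular matrices and $w \in \Fb^{e_j} \setminus \{0\}$, then $Mw$ is uniform on $\Fb^{e_j} \setminus \{0\}$. I will establish it using the transitivity of $\mathrm{GL}_{e_j}(\Fb)$ on nonzero vectors: given two nonzero targets $w_1, w_2$, choose $g \in \mathrm{GL}_{e_j}(\Fb)$ with $g w_1 = w_2$; then left multiplication $M \mapsto gM$ is a bijection of $\mathrm{GL}_{e_j}(\Fb)$ sending $\{M : Mw = w_1\}$ onto $\{M : Mw = w_2\}$, so all fibers of $M \mapsto Mw$ have equal cardinality. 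Applying this with $w = v-u$ yields two-fold uniformity of each constituent permutation, and combining with the coordinate-wise nested structure from the first paragraph completes the proof.
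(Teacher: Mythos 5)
Your argument is correct and is the natural block generalization of the standard verification for the digit-wise affine matrix scramble; the paper omits its own proof, deferring to the cited references, so there is no proof in the source to compare against. The decomposition of the $k$-th output block into $M_j^{(k,k)}\vec{\xi}_k^{(j)}$ plus a shift $c_k$ depending only on earlier blocks, the observation that $c_k$ is uniform on $\Fb^{e_j}$ and independent of $M_j^{(k,k)}$ because $\Delta_j^{(k)}$ is, and the transitivity of $\mathrm{GL}_{e_j}(\Fb)$ on $\Fb^{e_j}\setminus\{0\}$ to show $M_j^{(k,k)}(v-u)$ is uniform there, are exactly the right ingredients; the resulting probability $1/(q(q-1))$ with $q=b^{e_j}$ for each ordered distinct pair is correct, and the independence of $(M_j,\Delta_j)$ across $j$ handles the multi-coordinate part.

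One caveat worth stating explicitly in your write-up: the phrase ``this yields the required nested scramble structure'' establishes that each block permutation is a well-defined permutation depending only on the history of earlier blocks, but it does not (and cannot) establish the literal mutual independence of all the permutations that the paper's definition of a nested scramble requests. At a fixed level $k$, the permutations indexed by distinct prefixes $(\vec{\xi}_1^{(j)},\dots,\vec{\xi}_{k-1}^{(j)})$ all share the same random $M_j^{(k,k)}$ and $M_j^{(k,l)}$, $l<k$, so they are strongly dependent. This is not a defect of your proof---the affine matrix scramble, blocked or not, genuinely does not satisfy Owen's fully nested independence---but a looseness inherited from the paper, which itself labels the unblocked affine matrix scramble a nested scramble. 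Everything used downstream (preservation of equidistribution, the variance identity via gain coefficients) requires only the history-measurable permutation structure together with two-fold uniformity, both of which you prove, so the substance of the lemma is established; if you want the argument to hold against the letter of the definition, say explicitly that you verify these two operative properties and flag where full independence would fail.
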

\subsection{Nested ANOVA decomposition and gain coefficients}\label{sec:gain-definition}

To analyze the variance of an RQMC estimator, we decompose it into contributions from the integrand function and the point set. The function's structure is captured by the nested ANOVA (Analysis of Variance) decomposition, which we introduce first.

The nested ANOVA decomposition breaks down a function $f: [0,1)^d \to \RR$ into a sum of orthogonal components. 
This decomposition is typically defined using an orthonormal basis, such as the Haar basis \cite{Ow97a} or Walsh basis \cite{DP10book};
the equivalence of our formulation to such definitions is established in Appendix~\ref{sec:anova-equiv}.
The decomposition is constructed with respect to the mixed base $\bsb$;
while this dependency is often omitted from the following notation for brevity, all resulting ANOVA components implicitly depend on $\bsb$.
The decomposition is given by
\begin{equation}\label{eq:ANOVA}
f(\bsx)
= f_{\emptyset} + \sum_{\emptyset \neq u \subseteq 1{:}d} \sum_{\bsk \in \NN_0^{|u|}} \beta_{u,\bsk}(\bsx_u),
\end{equation}
where $f_{\emptyset} = \int_{[0,1)^d} f(\bsx) \,d\bsx$, and the component $\beta_{u,\bsk}$ is defined via auxiliary functions $\tilde{\beta}_{u,v,\bsk}$ as
\begin{align}
\beta_{u,\bsk}(\bsx_u) &= \sum_{v \subseteq u} (-1)^{|u|-|v|} \tilde{\beta}_{u,v,\bsk}(\bsx_u), \label{eq:beta} \\
\tilde{\beta}_{u,v,\bsk}(\bsx_u)
&= m_{u,v,\bsk} \int_{\BOX_{u,v,\bsk}(\bsx_u)} d\bsy_u \int_{[0,1)^{d-|u|}} f(\bsy_u,\bsz_{-u}) d\bsz_{-u},\label{eq:beta_tilde}
\end{align}
where $\BOX_{u,v,\bsk}(\bsx_u)$ is the elementary interval containing $\bsx_u$ defined by
\begin{equation}\label{eq:BOX}
\BOX_{u,v,\bsk}(\bsx_u)
= \prod_{j \in v} \left[\frac{\lfloor x_j b_j^{k_j+1} \rfloor}{b_j^{k_j+1}}, \frac{\lfloor x_j b_j^{k_j+1} \rfloor+1}{b_j^{k_j+1}} \right)
\times \prod_{j \in u\setminus v} \left[\frac{\lfloor x_j b_j^{k_j} \rfloor}{b_j^{k_j}}, \frac{\lfloor x_j b_j^{k_j} \rfloor+1}{b_j^{k_j}} \right),
\end{equation}
and $(m_{u,v,\bsk})^{-1}$ is the volume of $\BOX_{u,v,\bsk}(\bsx_u)$, namely
\begin{equation}\label{eq:m_uvk}
m_{u,v,\bsk} := \prod_{j \in v} b_j^{k_j+1} \prod_{j \in u \setminus v} b_j^{k_j}.
\end{equation}
By definition, $\tilde{\beta}_{u,v,\bsk}$ is constant on each elementary interval $\BOX_{u,v,\bsk}(\bsx_u)$, taking the value of the average of $f$ over it. These components are related by the following identity:
\begin{equation} \label{eq:beta2}
\sum_{\substack{\bsk' \in \NN_0^{|u|} \\ k'_j \le k_j \, (\forall j \in u)}}\beta_{u,\bsk'}(\bsx_u) = \tilde{\beta}_{u,u,\bsk}(\bsx_u).
\end{equation}
This is also shown in Appendix~\ref{sec:anova-equiv}.
The components $\beta_{u,\bsk}$ are mutually orthogonal. By setting
\[
\sigma_{u,\bsk}^2 := \Var \beta_{u,\bsk},
\]
it follows from orthogonality (Parseval’s identity) that the total variance of the function is
\[
\Var f =
\sum_{\emptyset \neq u \subseteq 1{:}d}
\sum_{\bsk \in \NN_0^{|u|}}
\sigma_{u,\bsk}^2.
\]
From this, the variance of the standard Monte-Carlo (MC) estimator with $n$ independent random points is given by
\begin{equation}\label{eq:MC-variance}
\frac{\Var f}{n} = \dfrac{1}{n}\sum_{\emptyset \neq u \subseteq 1{:}d}
\sum_{\bsk \in \NN_0^{|u|}}
\sigma_{u,\bsk}^2.
\end{equation}

Next, we introduce a tool to characterize the structure of the point set. The \textit{gain coefficients} quantify how a specific $n$-point set $P_n$ is expected to perform for each ANOVA component when used in an RQMC estimate. The definition for base $b$ was given in \cite{Ow97a}, and its extension to mixed bases was introduced in \cite[Section~2.3]{OP24gain}. 

\begin{definition}\label{def:gain}
Let $P_n=\{\bsx_0,\ldots,\bsx_{n-1}\}\subset [0,1)^d$ be an $n$-element point set. For $\emptyset \neq u \subseteq 1{:}d$ and $\bsk=(k_j)_{j\in u}\in \NN_0^{|u|}$, we define the gain coefficient of $P_n$ in base $\bsb=(b_1,\dots,b_d)$ by
\begin{align} 
G_{u,\bsk}(P_n)
&:= \frac{1}{n} \prod_{j \in u} (b_j-1)^{-1} \cdot \widetilde{G}_{u,\bsk}(P_n), \label{eq:G_Gtilde}
\end{align}
where
\begin{align}
\widetilde{G}_{u,\bsk}(P_n)
:= \sum_{i=0}^{n-1}\sum_{i'=0}^{n-1} \prod_{j \in u} \left(b_j \chi(\lfloor b_j^{k_j+1}x_{i,j}\rfloor=\lfloor b_j^{k_j+1}x_{i',j}\rfloor)
- \chi(\lfloor b_j^{k_j}x_{i,j}\rfloor=\lfloor b_j^{k_j}x_{i',j}\rfloor) \right) \label{eq:Gtilda_def}.
\end{align}
\end{definition}

An explicit formula for the gain coefficients is known \cite[(2.8)]{OP24gain},
\begin{equation} \label{eq:Gtilda_formula}
\widetilde{G}_{u,\bsk}(P_n)
= \sum_{v \subseteq u} H_{u,v} C_{u,v,\bsk}(P_n),     
\end{equation}
where we use the notation
$\boldsymbol{k} + \boldsymbol{1}_v = (\ell_j)_{j \in u}$ where $\ell_j = k_j + 1$ if
$j \in v$ and $\ell_j = k_j$ if $j \in u \setminus v$, and
\begin{align}
H_{u,v} &=
(-1)^{|u|-|v|} m_{u,v,\bszero} = 
\prod_{j \in v} b_j \prod_{j \in u \setminus v} (-1), \label{eq:H_def}\\
C_{u,v,\bsk}(P_n) &= \sum_{i=0}^{n-1}\sum_{i'=0}^{n-1}
\chi(\text{$\bsx_i$ and $\bsx_{i'}$ are in the same elementary $(\bsk + \bsone_v)$-interval}). \label{eq:C_def}
\end{align}

With these two tools—the ANOVA decomposition for the function and the gain coefficients for the point set—we can now state the variance of the RQMC estimator \cite{Ow97a,OP24gain}.

\begin{theorem}
Let $P_n \subset [0,1)^d$ be an $n$-point set, and let $\tilde{P}_n$ be its randomly scrambled point set using a nested scramble with two-fold uniformity. Then the variance of the RQMC estimator $I(f; \tilde{P}_n) = n^{-1}\sum_{\bsx \in \tilde{P}_n} f(\bsx)$ is given by
\begin{equation}\label{eq:variance-gain}
\Var[I(f; \tilde{P}_n)] = \sum_{\emptyset \neq u \subseteq 1{:}d} \sum_{\bsk \in \NN_0^{|u|}} \dfrac{G_{u,\bsk}(P_n)}{n}\sigma_{u,\bsk}^2.
\end{equation}
\end{theorem}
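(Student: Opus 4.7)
The plan is to adapt Owen's classical variance-decomposition argument \cite{Ow97a} to the mixed-base framework developed in \cite{OP24gain}. The starting point is the expansion
\[
\Var[I(f;\tilde P_n)] = \frac{1}{n^2}\sum_{i,i'=0}^{n-1}\operatorname{Cov}\bigl(f(\tilde\bsx_i),f(\tilde\bsx_{i'})\bigr).
\]
One-fold uniformity makes each $\tilde\bsx_i$ marginally uniform, so $\mathbb{E}[\beta_{u,\bsk}(\tilde\bsx_{i,u})]=0$ for every $(u,\bsk)$ with $u\neq\emptyset$, and the constant term $f_\emptyset$ contributes nothing. Substituting the ANOVA decomposition \eqref{eq:ANOVA} into each covariance and using bilinearity reduces the problem to controlling expectations $\mathbb{E}[\beta_{u,\bsk}(\tilde\bsx_{i,u})\,\beta_{u',\bsk'}(\tilde\bsx_{i',u'})]$ for arbitrary index pairs.

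First I would prove a \emph{cross-component vanishing} lemma: whenever $(u,\bsk)\neq (u',\bsk')$, this expectation is zero. The tool is the Haar-type orthonormal basis associated with the mixed base $\bsb$ (the equivalence with \eqref{eq:ANOVA} is the content of the appendix referenced there). Because a nested scramble in base $\bsb$ acts independently across the $d$ coordinates and, within each coordinate, is built from permutations with two-fold uniformity, the joint distribution of the leading disagreeing digits of $\tilde x_{i,j}$ and $\tilde x_{i',j}$ is uniform over ordered distinct pairs. This forces cross-basis inner products at the pair level to collapse to zero unless both the support set $u$ and the resolution index $\bsk$ agree.

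Second, for the diagonal terms $(u,\bsk)=(u',\bsk')$, I would compute the expectation by conditioning on the coincidence pattern of $\bsx_i$ and $\bsx_{i'}$ with respect to elementary intervals of depths $k_j$ and $k_j+1$ in each coordinate $j\in u$. The key per-coordinate calculation yields a factor proportional to
\[
b_j\,\chi\bigl(\lfloor b_j^{k_j+1}x_{i,j}\rfloor=\lfloor b_j^{k_j+1}x_{i',j}\rfloor\bigr) - \chi\bigl(\lfloor b_j^{k_j}x_{i,j}\rfloor=\lfloor b_j^{k_j}x_{i',j}\rfloor\bigr),
\]
which is exactly the kernel in \eqref{eq:Gtilda_def}. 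Independence across coordinates multiplies the factors, and the normalizing constant $\prod_{j\in u}(b_j-1)^{-1}$ emerges from averaging over $b_j(b_j-1)$ distinct ordered pairs in the digit block where disagreement first occurs. Summing over all $(i,i')$ and matching against Definition~\ref{def:gain} identifies the contribution of $(u,\bsk)$ with $G_{u,\bsk}(P_n)\sigma^2_{u,\bsk}/n$.

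The main obstacle is the per-coordinate calculation under two-fold uniformity: one must carefully distinguish the cases where $x_{i,j}$ and $x_{i',j}$ agree through digit $k_j$ but disagree at digit $k_j+1$, agree through digit $k_j+1$, or disagree earlier, and verify that in each case the Haar contribution collapses precisely to the $(b_j\chi_{k_j+1}-\chi_{k_j})/(b_j-1)$ structure. Once this kernel is established in one coordinate, the multi-coordinate assembly via independence and the comparison with \eqref{eq:G_Gtilde}--\eqref{eq:Gtilda_def} are routine bookkeeping.
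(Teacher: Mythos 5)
The paper does not actually prove this theorem; it cites it from Owen's and Owen--Pan's work and uses it as a black box. Your proposal reconstructs the standard proof: expand the estimator variance into pairwise covariances, insert the ANOVA decomposition, use the nested-scramble two-fold uniformity to kill the cross-component expectations $\EE[\beta_{u,\bsk}(\tilde\bsx_{i,u})\beta_{u',\bsk'}(\tilde\bsx_{i',u'})]$ for $(u,\bsk)\neq(u',\bsk')$, and reduce the diagonal expectation to the kernel $\prod_{j\in u}(b_j-1)^{-1}\bigl(b_j\chi_{k_j+1}-\chi_{k_j}\bigr)$. That is exactly Owen's argument, adapted to the mixed-base setting as in \cite{OP24gain}, and the structure is correct. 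The per-coordinate computation you defer is the real content, and the mechanism you invoke (conditioning on the first digit block where $x_{i,j}$ and $x_{i',j}$ disagree and averaging the two-fold-uniform scramble over the $b_j(b_j-1)$ distinct ordered digit pairs) is the right one.

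One caution on the final bookkeeping. Carrying the per-pair expectation $\EE[\beta_{u,\bsk}(\tilde\bsx_i)\beta_{u,\bsk}(\tilde\bsx_{i'})]=\sigma_{u,\bsk}^2\prod_{j\in u}(b_j-1)^{-1}W_{i,i'}$ through $\frac{1}{n^2}\sum_{i,i'}$ gives $\sigma_{u,\bsk}^2\,\frac{1}{n^2}\prod_{j\in u}(b_j-1)^{-1}\sum_{i,i'}W_{i,i'}$; matching this against Definition~\ref{def:gain} as literally printed --- with the $1/n$ inside \eqref{eq:G_Gtilde} and the $1/n$ inside \eqref{eq:Gtilda_def} --- would identify the contribution with $G_{u,\bsk}(P_n)\sigma_{u,\bsk}^2$, not $G_{u,\bsk}(P_n)\sigma_{u,\bsk}^2/n$. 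The resolution is that the paper's normalizations of $\widetilde G$ and $C$ in \eqref{eq:Gtilda_def} and \eqref{eq:C_def} are not consistent with the closed form \eqref{eq:Cformula} and with Theorem~\ref{thm:mimic}\eqref{item:mimic1} (which needs $G_{u,\bsk}=1$ for small $n$, i.e., MC-equivalent); taking $\widetilde G_{u,\bsk}=\sum_{i,i'}W_{i,i'}$ and $C_{u,v,\bsk}=\sum_{i,i'}\chi(\cdots)$ without the inner $1/n$ (and $1/n^2$) factors restores internal consistency and makes your derivation land exactly on \eqref{eq:variance-gain}. So your proof is sound, but you should explicitly flag and resolve this normalization before asserting the match, since a literal reading of the printed definitions would contradict the theorem by a factor of $n$.
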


By comparing the RQMC variance \eqref{eq:variance-gain} with the MC variance \eqref{eq:MC-variance}, it becomes clear that the gain coefficients $G_{u,\bsk}(P_n)$ measure how much the underlying point set $P_n$ reduces the variance compared to ordinary MC sampling for each frequency component $(u, \bsk)$ of the integrand. A smaller gain coefficient indicates a greater variance reduction for that component.

\section{Gain coefficients of scrambled points}\label{sec:gain}

\subsection{General results}
The theoretical properties of gain coefficients for scrambled $(\bszero,d)$-sequences can be established by generalizing the recent analysis of Owen and Pan for Halton sequences \cite{OP24gain}. The key insight enabling this generalization is that their foundational formula for the counting term $C_{u,v,\bsk}(P_n)$ (re-derived here as Lemma 3.1) does not depend on the specific structure of the Halton sequence, but only on the defining property of a $(\bszero,d)$-sequence. This allows us to extend their main results to our broader framework.

\begin{lemma}\label{lem:Cformula}
Let $\Scal = \{\bsx_0, \bsx_1, \dots\} \subset [0,1)^d$ be a $(\bszero,d)$-sequence in base $\bsb = (b_1, \dots, b_d)$.
Let $n \in \NN$
and $P_n = \{\bsx_{0}, \dots, \bsx_{n-1}\}$.
Then, for any $\emptyset \neq v \subseteq u \subseteq 1{:}d$ and $\bsk=(k_j)_{j\in u}\in \NN_0^{|u|}$, we have the following.
\begin{enuroman}
\item \label{item:Cformula1}
With $m_{u,v,\bsk}$ defined as in \eqref{eq:m_uvk}, we have
\begin{equation}\label{eq:Cformula}
C_{u,v,\bsk}(P_n)
= n + (2n-m_{u,v,\bsk})\left\lfloor \dfrac{n}{m_{u,v,\bsk}} \right\rfloor - m_{u,v,\bsk} \left\lfloor \dfrac{n}{m_{u,v,\bsk}} \right\rfloor^2.
\end{equation}
\item \label{item:Cformula2}
The values
$G_{u,\bsk}(P_n)$, $\widetilde{G}_{u,\bsk}(P_n)$ and $C_{u,v,\bsk}(P_n)$
are determined solely by $n$.
By a slight abuse of notation, we write
\begin{equation} \label{eq:abuse}
G_{u,\bsk}(n) := G_{u,\bsk}(P_n),
\qquad \widetilde{G}_{u,\bsk}(n) := \widetilde{G}_{u,\bsk}(P_n),
\qquad C_{u,v,\bsk}(n) := C_{u,v,\bsk}(P_n).
\end{equation}
\end{enuroman}
\end{lemma}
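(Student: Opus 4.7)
The plan is to exploit the defining property of an equidistributed sequence to reduce the pair-count in $C_{u,v,\bsk}(P_n)$ to a problem about how the first $n$ indices distribute among elementary intervals on the $u$-coordinates, and then invoke the classical ``$r(q+1)^2 + (m-r)q^2$'' identity. Concretely, set $m := m_{u,v,\bsk}$ and lift the $|u|$-dimensional resolution $\bsk + \bsone_v$ to a full $d$-dimensional exponent vector $\bsk^\star$ by putting $k^\star_j = k_j + \chi(j\in v)$ for $j \in u$ and $k^\star_j = 0$ for $j \notin u$. Then an elementary $\bsk^\star$-interval in base $\bsb$ is a cylinder whose base is an elementary $(\bsk+\bsone_v)$-interval on the $u$-coordinates and whose fibre in the remaining coordinates is all of $[0,1)^{d-|u|}$; moreover $B := \prod_{j=1}^d b_j^{k^\star_j} = m$. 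Applying the equidistribution property with these exponents, every block $\{\bsx_{sm}, \dots, \bsx_{(s+1)m-1}\}$ places exactly one point in each of the $m$ elementary $\bsk^\star$-intervals, so its $u$-projection visits each elementary $(\bsk+\bsone_v)$-interval on $[0,1)^{|u|}$ exactly once.

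I then write $n = qm + r$ with $q = \lfloor n/m \rfloor$ and $0 \le r < m$, and decompose $P_n$ into $q$ complete blocks of size $m$ plus an initial slice of length $r$ from the $(q{+}1)$-th block. Each of the $m$ elementary intervals on $u$ thus collects $q$ points from the full blocks, and exactly $r$ of them receive one additional point from the partial slice. Writing $N_I$ for the number of indices landing in interval $I$, the unnormalised pair-count equals
\[
\sum_{i,i'=0}^{n-1} \chi\bigl(\bsx_i,\bsx_{i'}\text{ in same elementary }(\bsk+\bsone_v)\text{-interval}\bigr) \;=\; \sum_I N_I^2 \;=\; r(q+1)^2 + (m-r)q^2 \;=\; mq^2 + 2qr + r,
\]
and substituting $r = n - qm$ yields \eqref{eq:Cformula} after a short algebraic simplification. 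For part \ref{item:Cformula2}, the right-hand side of \eqref{eq:Cformula} depends only on $n$ (and the fixed data $u,v,\bsk,\bsb$), so $C_{u,v,\bsk}(P_n)$ is independent of the particular equidistributed sequence; the identities \eqref{eq:Gtilda_formula} and \eqref{eq:G_Gtilde} then propagate this independence to $\widetilde{G}_{u,\bsk}$ and $G_{u,\bsk}$, legitimising the notation in \eqref{eq:abuse}.

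The only genuinely subtle step is the lift $\bsk \mapsto \bsk^\star$: one has to observe that padding the resolution to the coordinates $j \notin u$ by zero both keeps the block size equal to the $u$-only product $m_{u,v,\bsk}$ and converts the $d$-dimensional equidistribution into a clean one-point-per-interval statement about the projection onto $u$. Once this translation is in place, the rest is an elementary counting argument combined with the identity $n = qm + r$, and no further input from the structure of $\Scal$ is needed.
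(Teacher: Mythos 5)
Your proof is correct and follows essentially the same route as the paper: write $n = qm + r$ with $m = m_{u,v,\bsk}$, invoke equidistribution to conclude that $r$ of the $m$ elementary $(\bsk+\bsone_v)$-intervals receive $q+1$ points and the rest receive $q$, and then simplify $r(q+1)^2 + (m-r)q^2$. Your explicit remark about padding the resolution vector by zeros on the coordinates outside $u$ is a useful clarification that the paper leaves implicit, but it does not change the substance of the argument.
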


\begin{proof}
Since \eqref{item:Cformula2} follows from 
\eqref{eq:G_Gtilde}, \eqref{eq:Gtilda_formula} and \eqref{eq:Cformula},
it suffices to show $\eqref{eq:Cformula}$.
Let $n = qm_{u,v,\bsk} + r$ with $0 \le r < m_{u,v,\bsk}$
and let $\bsk' := \bsk + \bsone_v$.
The number of the elementary $\bsk'$-intervals in base $\bsb$
is $m_{u,v,\bsk}$.
From the definition of $(\bszero,d)$-sequence in base $\bsb$,
among such intervals,
$r$ intervals contain $q+1$ points from $P$
and the other $m_{u,v,\bsk} - r$ intervals contain $q$ points.
Thus \eqref{eq:C_def} implies that
\[
C_{u,v,\bsk}(P_n)
= r(q+1)^2 + (m_{u,v,\bsk} - r)q^2.
\]
After simplification using $r=n-qm_{u,v,\bsk}$ and $q = \lfloor n/m_{u,v,\bsk} \rfloor$,
we obtain \eqref{eq:Cformula}.
\end{proof}

This observation enables us to generalize results in \cite{OP24gain}, as follows.

\begin{theorem}\label{thm:mimic}
Let $\Scal = \{\bsx_0, \bsx_1, \dots\} \subset [0,1)^d$ be a $(\bszero,d)$-sequence in base $\bsb = (b_1, \dots, b_d)$.
Let $\emptyset \neq u \subseteq 1{:}d$ and $\bsk \in \NN_0^{|u|}$.
Then the gain coefficients in base $\bsb$ satisfy the following.
\begin{enuroman}
\item \label{item:mimic1}
If $1 \le n \le m_{u,\emptyset,\bsk}$,
\[
G_{u,\bsk}(n) = 1.
\]
\item \label{item:mimic2}
If $n = q m_{u,u,\bsk}$ for $q \in \NN$, then
\[
G_{u,\bsk}(n) = 0.
\]
\item \label{item:mimic3}
Let $n = q m_{u,u,\bsk} + r$ for $q,r \in \NN$ with
$0 < r < m_{u,u,\bsk}$. Then
\[
G_{u,\bsk}(n) = \dfrac{r}{n}G_{u,\bsk}(r).
\]
\item \label{item:mimic4}
For $j \in u$, define $\bsk'$ by $k'_j = k_j+1$ and $k'_\ell = k_\ell$ for $\ell \in u \setminus \{j\}$.
Then 
\[
G_{u,\bsk'}(nb_j) = G_{u,\bsk}(n).
\]
\item \label{item:mimic5}
For $n \in \NN$, we have
\[
G_{u,\bsk}\left(n \prod_{j \in u} b_j\right) = G_{u,\bszero}(n).
\]
\item \label{item:mimic6}
For any $\emptyset \neq v \subseteq u$,
\[
\sup_{n \in \NN} G_{v,\bszero}(n) \le \sup_{n \in \NN} G_{u,\bszero}(n)
\]
\item \label{item:mimic7}
Let $j_m = \argmin_{j \in u} b_j$. Then
\[
\sup_{n \in \NN} G_{u,\bszero}(n)
= \sup_{n \in \NN} G_{u,\bsk}(n)
\le \prod_{j \in u \setminus \{j_m\}  } \dfrac{b_j}{b_j-1}.
\]
\end{enuroman}
\end{theorem}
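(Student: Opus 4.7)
The plan is to derive each item directly from the closed-form expression in Lemma~\ref{lem:Cformula} for $C_{u,v,\bsk}(n)$, combined with the algebraic identity
\[
\sum_{v \subseteq u} H_{u,v} \prod_{j \in v} x_j \prod_{j \in u \setminus v} y_j = \prod_{j \in u}(b_j x_j - y_j),
\]
which will collapse the alternating sums $\sum_v H_{u,v}(\cdots)$ that arise in each case.

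For (i), the hypothesis $n \le m_{u,\emptyset,\bsk}$ forces $n \le m_{u,v,\bsk}$ for every $v \subseteq u$, so Lemma~\ref{lem:Cformula} gives $C_{u,v,\bsk}(n) = n$ uniformly; then $\widetilde{G}_{u,\bsk}(n) = n\sum_v H_{u,v} = n\prod_{j\in u}(b_j - 1)$ (specializing $x_j = y_j = 1$), which cancels the normalization. For (ii), when $n = r\, m_{u,u,\bsk}$ every $m_{u,v,\bsk}$ divides $n$, so $C_{u,v,\bsk}(n) = n^2/m_{u,v,\bsk}$; specializing $x_j = b_j^{-k_j-1}$ and $y_j = b_j^{-k_j}$ produces $\sum_v H_{u,v}/m_{u,v,\bsk} = \prod_{j \in u}(b_j^{-k_j} - b_j^{-k_j}) = 0$. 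For (iii), writing $n = q\, m_{u,u,\bsk} + r$ and applying the formula per $v$ yields $C_{u,v,\bsk}(n) - C_{u,v,\bsk}(r) = q(n+r)\prod_{j\in u\setminus v} b_j$; summing against $H_{u,v}$ and using the identity with $x_j = 1$, $y_j = b_j$ gives $\prod_{j\in u}(b_j - b_j) = 0$, so $\widetilde{G}_{u,\bsk}(n) = \widetilde{G}_{u,\bsk}(r)$, and (iii) follows after dividing by $n\prod_{i\in u}(b_i-1)$.

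For (iv), the key observation is $m_{u,v,\bsk'} = b_j\, m_{u,v,\bsk}$ for every $v \subseteq u$. Lemma~\ref{lem:Cformula} then gives $C_{u,v,\bsk'}(n b_j) = b_j\, C_{u,v,\bsk}(n)$ per $v$, so $\widetilde{G}_{u,\bsk'}(n b_j) = b_j\,\widetilde{G}_{u,\bsk}(n)$, and dividing by $n b_j \prod_{i\in u}(b_i-1)$ produces the claimed identity. Item (v) then follows by iterating (iv) coordinate-by-coordinate, one increment of $k_j$ at a time.

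For (vi), I would argue by induction on $|u \setminus v|$, reducing to the case $u = v \cup \{j\}$. Splitting the sum over $w \subseteq u$ according to whether $j \in w$ decomposes
\[
\widetilde{G}_{u,\bszero}(n) = -\widetilde{G}_{v,\bszero}(n) + b_j \sum_{w \subseteq v} H_{v,w}\, C_{u,w \cup \{j\},\bszero}(n),
\]
and careful sign-tracking combined with the explicit $C$-formula should produce the desired supremum comparison. For (vii), combining (iii) and (v) identifies the value sets $\{G_{u,\bsk}(n) : n \in \NN\}$ and $\{G_{u,\bszero}(n) : n \in \NN\}$, yielding equality of the suprema, while (ii)--(iii) localize the supremum to the finite set $\{1 \le r < m_{u,u,\bszero}\}$. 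The main obstacle is the sharpness of the upper bound: I expect the maximum to be attained at $r = b_{j_m}$, where direct evaluation via Lemma~\ref{lem:Cformula} and the factorization identity reduces $G_{u,\bszero}(b_{j_m})$ exactly to $\prod_{j \in u \setminus \{j_m\}} b_j/(b_j-1)$. Verifying that this $r$ is indeed the global maximizer over $\{1, \dots, m_{u,u,\bszero}-1\}$ is the most delicate step and will likely require a case split according to the position of $r$ relative to the nested quantities $m_{u,v,\bszero}$.
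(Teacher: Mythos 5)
Your proofs of items (i)--(v) are correct and, modulo the level of detail, take the same route as the paper: everything reduces to the closed form of Lemma~\ref{lem:Cformula} together with the collapsing identity $\sum_{v\subseteq u} H_{u,v}\prod_{j\in v}x_j\prod_{j\in u\setminus v}y_j = \prod_{j\in u}(b_jx_j - y_j)$. This fills in computations the paper delegates to~\cite{OP24gain} for (ii), (iv) and (v), and your argument for (iii) matches the paper's in substance. One small but important point: your (iv) computation yields $G_{u,\bsk'}(nb_j) = G_{u,\bsk}(n)$, with $\bsk$ (not $\bsk'$) on the right, and that is in fact the version one must use to deduce (v) by iteration; as written the identity with $\bsk'$ on both sides fails already for $d=1$, $b_1=2$, $\bsk'=(1)$, $n=2$ (where $G_{\{1\},(1)}(4)=0$ but $G_{\{1\},(1)}(2)=1$). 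You were right to trust your computation.

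Items (vi) and (vii) are not actually proved, and this is a genuine gap. For (vi), the decomposition $\widetilde{G}_{u,\bszero}(n) = -\widetilde{G}_{v,\bszero}(n) + b_j\sum_{w\subseteq v}H_{v,w}C_{u,w\cup\{j\},\bszero}(n)$ is valid, but the minus sign in front of $\widetilde{G}_{v,\bszero}(n)$ is exactly what makes the supremum comparison nontrivial; ``careful sign-tracking'' is not an argument, and nothing you write explains how it resolves. For (vii), your claim that (iii) and (v) ``identify the value sets'' of $G_{u,\bsk}$ and $G_{u,\bszero}$ only gives one inclusion --- (v) shows $\{G_{u,\bszero}(n)\}\subseteq\{G_{u,\bsk}(n)\}$, whereas it is the reverse containment that bounds $\sup_n G_{u,\bsk}(n)$ from above --- and the decisive step, that $r=b_{j_m}$ is the global maximizer over $\{1,\dots,m_{u,u,\bszero}-1\}$, you explicitly concede you have not verified. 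The paper handles (vi) and (vii) by invoking Theorems~5.2 and~5.3 of~\cite{OP24gain}, whose proofs contain precisely the estimates your sketch is missing; without reproducing or replacing those arguments, your proposal does not establish the theorem in full.
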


\begin{proof}
Except for items \eqref{item:mimic1} and \eqref{item:mimic3},  
the arguments for the corresponding theorems in \cite{OP24gain} carry over verbatim,  
since the proofs rely only on \eqref{eq:Gtilda_formula} and \eqref{eq:Cformula}.  
The correspondence between our results and those of \cite{OP24gain} is as follows:  
\eqref{item:mimic1} to Proposition~3.1 in \cite{OP24gain},
\eqref{item:mimic2} to Proposition~3.2,
\eqref{item:mimic3} to Proposition~3.3,
\eqref{item:mimic4} to Proposition~3.5,
\eqref{item:mimic5} to Corollary~3.6,
\eqref{item:mimic6} to Theorem~5.2,
\eqref{item:mimic7} to Theorem~5.3.

For \eqref{item:mimic1}, our result extends the corresponding proposition in \cite{OP24gain} to include the case $n = m_{u,\emptyset,\bsk}$.
Since $C_{u,v,k}(n) = n$ holds for this value of $n$, the original proof applies directly.  

For \eqref{item:mimic3}, since the corresponding proposition explicitly uses a property of the Halton sequence,  
a separate proof is required in our more general setting.
From \eqref{eq:Cformula},
since $m_{u,u,\bsk}$ is a multiple of $m_{u,v,\bsk}$,
for $n = qm_{u,u,\bsk}+r$ we have
\begin{align*}
C_{u,v,\bsk}(n)
&= n + (2n-m_{u,v,\bsk})\left(\dfrac{qm_{u,u,\bsk}}{m_{u,v,\bsk}} + \left\lfloor \dfrac{r}{m_{u,v,\bsk}} \right\rfloor \right) - m_{u,v,\bsk} \left(\dfrac{qm_{u,u,\bsk}}{m_{u,v,\bsk}} + \left\lfloor \dfrac{r}{m_{u,v,\bsk}} \right\rfloor \right)^2\\
&= \dfrac{q^2 m_{u,u,\bsk}^2 + 2rq m_{u,u,\bsk}}{m_{u,v,\bsk}} + C_{u,v,\bsk}(r).
\end{align*}
Hence
\begin{align*}
\widetilde{G}_{u,\bsk}(n)
= \sum_{v \subseteq u} H_{u,v} C_{u,v,\bsk}(n)
&= \sum_{v \subseteq u} H_{u,v} \dfrac{q^2 m_{u,u,\bsk}^2 + 2r m_{u,u,\bsk}}{m_{u,v,\bsk}} + \sum_{v \subseteq u} H_{u,v}C_{u,v,\bsk}(r)\\
&= (q^2 m_{u,u,\bsk}^2 + 2r m_{u,u,\bsk}) \sum_{v \subseteq u} (-1)^{|u|-|v|} + \widetilde{G}_{u,\bsk}(r)\\
&= (q^2 m_{u,u,\bsk}^2 + 2r m_{u,u,\bsk}) \prod_{j \in u}(1-1) + \widetilde{G}_{u,\bsk}(r)\\
&= \widetilde{G}_{u,\bsk}(r).
\end{align*}
Thus the item \eqref{item:mimic3} follows by the normalization of \eqref{eq:G_Gtilde}.
\end{proof}

As a corollary, the variance of the RQMC error using a scrambled $(\bszero,d)$-sequence decays as \(o(1/n)\),
which is strictly better than plain Monte Carlo.  
The proof is identical to that of \cite[Corollary~3.4]{OP24gain},  
with \(\bsx_i\) replaced by \(\tilde{\bsx}_i\).

\begin{corollary}
Let $\Scal \subset [0,1)^d$ be a $(\bszero,d)$-sequence in base $\bsb = (b_1, \dots, b_d)$,
and let $(\tilde{\bsx}_i)_{i\ge 0}$ 
be a sequence obtained by a random scrambling of $\Scal$ in base $\bsb$ that ensures two-fold uniformity.
Then, for any $f \in L_2([0,1)^d)$, we have
\[
\lim_{n \to \infty} n \cdot \Var \left(\dfrac{1}{n}\sum_{i=0}^{n-1}f(\tilde{\bsx}_i)\right) = 0.
\]
\end{corollary}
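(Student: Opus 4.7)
The plan is to start from the variance identity \eqref{eq:variance-gain}, which after multiplying by $n$ yields
\[
n \cdot \Var\!\left(\tfrac{1}{n}\sum_{i=0}^{n-1} f(\tilde{\bsx}_i)\right) = \sum_{\emptyset \neq u \subseteq 1{:}d}\sum_{\bsk \in \NN_0^{|u|}} G_{u,\bsk}(n)\,\sigma_{u,\bsk}^2,
\]
so the corollary reduces to showing this series vanishes as $n \to \infty$. I would obtain this by a dominated-convergence argument on the countable index set $\{(u,\bsk)\}$, which needs two ingredients: pointwise vanishing $G_{u,\bsk}(n) \to 0$ for every fixed $(u,\bsk)$, and a summable dominant that is uniform in $n$.

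For the pointwise step, I would fix $(u,\bsk)$ and perform the Euclidean division $n = q\, m_{u,u,\bsk} + r$ with $0 \le r < m_{u,u,\bsk}$. If $r=0$, Theorem~\ref{thm:mimic}\eqref{item:mimic2} gives $G_{u,\bsk}(n)=0$ outright. If $0 < r < m_{u,u,\bsk}$, Theorem~\ref{thm:mimic}\eqref{item:mimic3} gives $G_{u,\bsk}(n) = (r/n)\,G_{u,\bsk}(r)$; since $r$ is bounded by the fixed constant $m_{u,u,\bsk}$ and $G_{u,\bsk}(r)$ is bounded by Theorem~\ref{thm:mimic}\eqref{item:mimic7}, this is $O(1/n)$ and hence tends to zero.

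For the dominant, I would again invoke Theorem~\ref{thm:mimic}\eqref{item:mimic7} to write
\[
G_{u,\bsk}(n) \le \prod_{j\in u\setminus\{j_m\}} \frac{b_j}{b_j-1} \le \prod_{j=1}^{d}\frac{b_j}{b_j-1} =: K,
\]
a finite constant independent of $n$, $u$, and $\bsk$. Since Parseval's identity gives $\sum_{u,\bsk}\sigma_{u,\bsk}^2 = \Var f < \infty$, the summable majorant $K\,\sigma_{u,\bsk}^2$ justifies dominated convergence for the series, and combined with the pointwise vanishing above yields $\sum_{u,\bsk} G_{u,\bsk}(n)\,\sigma_{u,\bsk}^2 \to 0$.

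The main (modest) subtlety is ensuring the bound in Theorem~\ref{thm:mimic}\eqref{item:mimic7} gives a truly uniform dominant; coarsening the $u$-dependent product to the global constant $\prod_{j=1}^{d} b_j/(b_j-1)$ removes the dependence on $u$ and makes the dominated convergence step routine. No additional structure of $\Scal$ beyond equidistribution is needed, which is why the argument works verbatim in our broader setting.
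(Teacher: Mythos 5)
Your proof is correct and follows essentially the same route as the paper's (which simply defers to the proof of Corollary~3.4 in \cite{OP24gain}): dominated convergence applied to the series $\sum_{u,\bsk} G_{u,\bsk}(n)\,\sigma^2_{u,\bsk}$, with pointwise vanishing supplied by Theorem~\ref{thm:mimic}\eqref{item:mimic2}--\eqref{item:mimic3} and the uniform summable dominant supplied by Theorem~\ref{thm:mimic}\eqref{item:mimic7} together with Parseval's identity (note the implicit standing assumption $f\in L^2([0,1)^d)$, needed for $\Var f<\infty$). One cosmetic point: Theorem~\ref{thm:mimic}\eqref{item:mimic3} is stated for $q\in\NN$, i.e.\ $n>m_{u,u,\bsk}$, so the case $n<m_{u,u,\bsk}$ should be handled by item~\eqref{item:mimic7} alone; since you only care about $n\to\infty$ for each fixed $(u,\bsk)$, this does not affect the limit.
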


\subsection{Gain coefficients of scrambled \texorpdfstring{$(0,\bse,d)$}{(0,e,d)}-sequences}

We now specialize these general results to scrambled $(0,\bse,d)$-sequences, in particular Sobol' and Niederreiter sequences.
Since $(0,\bse,d)$-sequences in base $b$ are $(\bszero,d)$-sequences in base $(b^{e_1}, \dots, b^{e_d})$,
we will state the results for the latter.

First we consider non-asymptotic results for $n=b^m$.
\begin{corollary}\label{cor:gain-0ed}
Let $\Scal$ be a $(\bszero,d)$-sequence in base $\bsb=(b^{e_1}, \dots, b^{e_d})$.
Let $\emptyset \neq u \subseteq 1{:}d$, $\bsk \in \NN_0^{|u|}$ and
$j_m = \argmin_{j \in u} e_j$.
Then, for $n \in \NN_0$ we have
\[
G_{u,\bsk}(n) \le \Gamma_u,
\]
where
\begin{equation}\label{eq:Gamma_u}
\Gamma_u := \prod_{j \in u \setminus \{j_m\}} \frac{b^{e_j}}{b^{e_j}-1},
\end{equation}
with the convention that the empty product equals $1$.
For the case $n=b^m$ with $m \in \NN_0$, we have
\[
\begin{cases}
G_{u,\bsk}(b^m) = 0 & \text{if } \sum_{j\in u} e_j(k_j+1) \le m, \\
G_{u,\bsk}(b^m) \le \Gamma_u  & \text{if } \sum_{j\in u} e_j k_j < m < \sum_{j \in u} e_j(k_j+1), \\
G_{u,\bsk}(b^m) = 1 & \text{if } \sum_{j\in u} e_j k_j \ge m.
\end{cases}
\]
\end{corollary}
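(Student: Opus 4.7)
The plan is to deduce this corollary essentially as a direct specialization of Theorem~\ref{thm:mimic}, using the substitution $b_j = b^{e_j}$. First I would observe that because $\Scal$ is equidistributed in base $\bsb = (b^{e_1},\dots,b^{e_d})$, every quantity entering the definitions of $m_{u,v,\bsk}$ and the gain coefficients just takes $b_j = b^{e_j}$. In particular,
\[
m_{u,\emptyset,\bsk} = \prod_{j\in u} b^{e_j k_j} = b^{\sum_{j\in u} e_j k_j},\qquad
m_{u,u,\bsk} = \prod_{j\in u} b^{e_j(k_j+1)} = b^{\sum_{j\in u} e_j(k_j+1)},
\]
and, since $b\ge 2$, $\argmin_{j\in u} b^{e_j} = \argmin_{j\in u} e_j = j_m$. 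The universal bound $G_{u,\bsk}(n) \le \Gamma_u$ then follows verbatim from Theorem~\ref{thm:mimic}\eqref{item:mimic7} after rewriting $b_j/(b_j-1) = b^{e_j}/(b^{e_j}-1)$.

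For the three case analysis when $n = b^m$, I would match $m$ against the exponents $\sum_{j\in u} e_j k_j$ and $\sum_{j\in u} e_j(k_j+1)$. In the first case, the hypothesis $\sum_{j\in u} e_j(k_j+1) \le m$ says exactly that $m_{u,u,\bsk}$ divides $b^m$, so $n = (b^m/m_{u,u,\bsk})\,m_{u,u,\bsk}$ is an integer multiple of $m_{u,u,\bsk}$ and Theorem~\ref{thm:mimic}\eqref{item:mimic2} gives $G_{u,\bsk}(b^m)=0$. The middle case is covered by the global bound $\Gamma_u$ established above. In the third case, the hypothesis $\sum_{j\in u} e_j k_j \ge m$ translates to $n = b^m \le m_{u,\emptyset,\bsk}$, which places us in the regime of Theorem~\ref{thm:mimic}\eqref{item:mimic1} and yields $G_{u,\bsk}(b^m)=1$.

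Since every step is a direct translation of an earlier item of Theorem~\ref{thm:mimic}, there is no substantial obstacle. The one thing that deserves a line of care is the boundary of case~1, where $\sum_{j\in u} e_j(k_j+1) = m$ and hence $n = m_{u,u,\bsk}$; this is the $r=1$ instance of Theorem~\ref{thm:mimic}\eqref{item:mimic2} and still gives $G_{u,\bsk}(b^m)=0$, so there is no gap between the three cases. Similarly, the empty-set convention for $\Gamma_u$ handles the case $|u|=1$, where $u\setminus\{j_m\} = \emptyset$ and the bound collapses to $G_{u,\bsk}(n)\le 1$, consistent with $\Gamma_u$ being an empty product.
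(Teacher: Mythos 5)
Your proof is correct and follows essentially the same route as the paper: compute $m_{u,\emptyset,\bsk}$ and $m_{u,u,\bsk}$ in the mixed base $(b^{e_1},\dots,b^{e_d})$, then invoke items \eqref{item:mimic7}, \eqref{item:mimic2}, \eqref{item:mimic7}, and \eqref{item:mimic1} of Theorem~\ref{thm:mimic} for, respectively, the universal bound and the three cases of $n=b^m$. The remarks about the boundary $\sum_{j\in u}e_j(k_j+1)=m$ and the $|u|=1$ empty-product convention are minor extra care that the paper leaves implicit but are entirely consistent with it.
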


\begin{proof}
We use Theorem~\ref{thm:mimic}.
In our case, we have
$m_{u,\emptyset,\bsk} = \prod_{j \in u} b^{e_jk_j}$ and
$m_{u,u,\bsk} = \prod_{j \in u} b^{e_j(k_j+1)}$.
Thus the claim for general $n$ follows from \eqref{item:mimic7},
and the three claims for $n=b^m$ follow from
\eqref{item:mimic2}, \eqref{item:mimic7}, and \eqref{item:mimic1} of Theorem~\ref{thm:mimic}, respectively.
\end{proof}

\begin{remark}
By letting $\bse = (1,\dots,1)$,
we obtain the gain coefficients of $(0,d)$-sequence in base $b$ as 
\[
\sup_{n \in \NN} G_{u,\bsk}(n) \leq \left(\dfrac{b}{b-1}\right)^{|u|-1}.
\]
For $n=b^m$, this bound for $(0,m,d)$-nets is given in \cite{Ow97a}.
\end{remark}

Indeed, the exact maximum gain coincides with the upper bound given in Corollary~\ref{cor:gain-0ed}.

\begin{corollary}\label{cor:maxgain_uk}
Let $\Scal$ be a $(0,\bse,d)$-sequence in base $b$.
Let $\Gamma_u$ be constants defined in \eqref{eq:Gamma_u}.
Then, for any $\emptyset \neq u \subseteq 1{:}d$ and $\bsk \in \NN_0^{|u|}$ we have
\[
\max_{n \in \NN} G_{u,\bsk}(n) = \Gamma_u.
\]
\end{corollary}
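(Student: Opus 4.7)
The plan is to pair the already-proved upper bound $G_{u,\bsk}(n) \le \Gamma_u$ from Corollary~\ref{cor:gain-0ed} with an explicit witness attaining it.

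For the case $\bsk = \bszero$, I propose the choice
\[
n^* := \prod_{j \in u \setminus \{j_m\}} b^{e_j} = b^{E-e_{j_m}}, \qquad E := \sum_{j \in u} e_j.
\]
The decisive feature of $n^*$ is a clean divisibility dichotomy with respect to the moduli $m_{u,v,\bszero} = b^{\sum_{j\in v} e_j}$: for every proper $v \subsetneq u$ one has $\sum_{j \in v} e_j \le E - e_{j_m}$, so $m_{u,v,\bszero}$ divides $n^*$; whereas for $v = u$, $m_{u,u,\bszero} = b^E > n^*$. Lemma~\ref{lem:Cformula}\eqref{item:Cformula1} therefore collapses the counting terms to $C_{u,v,\bszero}(n^*) = (n^*)^2/m_{u,v,\bszero}$ for $v \subsetneq u$ and $C_{u,u,\bszero}(n^*) = n^*$.

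Substituting these into \eqref{eq:Gtilda_formula} and using $H_{u,v}/m_{u,v,\bszero} = (-1)^{|u|-|v|}$, the sum over $v \subsetneq u$ reduces to $-(n^*)^2$ via the standard identity $\sum_{v\subseteq u}(-1)^{|u|-|v|}=0$. Only the $v=u$ contribution survives, yielding $\widetilde{G}_{u,\bszero}(n^*) = n^* b^E - (n^*)^2 = (n^*)^2 (b^{e_{j_m}}-1)$. Dividing by $n^* \prod_{j \in u}(b^{e_j}-1)$ cancels a single factor $(b^{e_{j_m}}-1)$ and leaves exactly $b^{E-e_{j_m}}/\prod_{j\ne j_m}(b^{e_j}-1) = \Gamma_u$, settling the case $\bsk=\bszero$.

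For general $\bsk$, I would transport this optimum via the scaling identities of Theorem~\ref{thm:mimic}\eqref{item:mimic4}--\eqref{item:mimic5}: iterating them yields $G_{u,\bsk}\bigl(n^*\prod_{j \in u} b^{e_j k_j}\bigr) = G_{u,\bszero}(n^*) = \Gamma_u$, so $n^*_\bsk := n^*\prod_{j \in u} b^{e_j k_j}$ attains the maximum. The real substance of the argument is spotting the right $n^*$; once its divisibility pattern with respect to the $m_{u,v,\bszero}$'s is in view, the collapse of $\widetilde{G}_{u,\bszero}(n^*)$ to the boundary term $v = u$ is purely algebraic, so no estimate or limiting procedure is required.
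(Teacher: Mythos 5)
Your proposal is correct and follows essentially the same route as the paper: same witness $n^* = \prod_{j\in u\setminus\{j_m\}}b^{e_j}$, same divisibility split over $v\subsetneq u$ versus $v=u$, same cancellation of the proper-subset terms via $\sum_{v\subseteq u}(-1)^{|u|-|v|}=0$, and the same normalization at the end. The one minor deviation is how you handle the reduction to $\bsk = \bszero$: the paper invokes Theorem~\ref{thm:mimic}\eqref{item:mimic7} (equality of the two suprema), whereas you transport the explicit optimizer via \eqref{item:mimic4}--\eqref{item:mimic5} to produce a concrete $n^*_{\bsk}$ where the bound is attained; this is a valid alternative and, if anything, makes the claim that the supremum is actually a maximum for every $\bsk$ more explicit.
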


\begin{proof}
According to Theorem~\ref{thm:mimic} \eqref{item:mimic7}, it suffices to show the theorem for $\bsk = \bszero$ and in particular that $n^* := \prod_{j \in u \setminus \{j_m\}} b^{e_j}$ attains the maximum.
Since $n^*$ is a multiple of $m_{u,v,\bszero}$ for $v \subsetneq u$,
\eqref{eq:Cformula} implies that
\[
C_{u,v,\bszero}(n^*)
= n^* + (2n^*-m_{u,v,\bszero})\dfrac{n^*}{m_{u,v,\bszero}} - m_{u,v,\bszero} \left( \dfrac{n^*}{m_{u,v,\bszero}} \right)^2
= \dfrac{(n^*)^2}{m_{u,v,\bszero}}
\]
for $v \subsetneq u$, and for $v=u$ we have $C_{u,u,\bszero}(n^*) = n^*$.
Thus it follows from \eqref{eq:Gtilda_formula} that
\begin{align*}
\widetilde{G}_{u,\bszero}(n^*)
= \sum_{v \subseteq u} H_{u,v} C_{u,v,\bszero}(n^*)
&= (n^*)^2\sum_{v \subsetneq u} (-1)^{|u|-|v|} + H_{u,u}n^*\\
&= (n^*)^2 \sum_{v \subseteq u} (-1)^{|u|-|v|} + H_{u,u}n^* - (n^*)^2
= H_{u,u}n^* - (n^*)^2.
\end{align*}
In conjunction with the normalization \eqref{eq:G_Gtilde}, we obtain the desired result.
\end{proof}

From Corollary~\ref{cor:gain-0ed} and Eq.~\eqref{eq:variance-gain},
we have the bound on the variance of the scrambled points estimator.
\begin{corollary}
Let $\Scal$ be a $(0,\bse,d)$-sequence in base $b$,
and let $\tilde{\bsx}_0, \tilde{\bsx}_1, \tilde{\bsx}_2, \dots$ be a randomly scrambled sequence of $\Scal$ in base $\bsb=(b^{e_1}, \dots, b^{e_d})$.
Let $\Gamma_u$ be constants defined in \eqref{eq:Gamma_u}.
Then, for $m \in \NN$ we have
\[
\Var \left(\dfrac{1}{b^m}\sum_{i=0}^{b^m-1}f(\tilde{\bsx}_i) \right)
\le \dfrac{1}{b^m} \sum_{\emptyset \neq u \subseteq 1{:}d}  \Gamma_u \sum_{\substack{\bsk\in \NN_0^{|u|}\\ \sum_{j \in u} e_j(k_j+1) > m}}\sigma_{u,\bsk}^2.
\]
\end{corollary}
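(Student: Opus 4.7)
The plan is to substitute the exact variance identity \eqref{eq:variance-gain} and then discard or bound each gain coefficient $G_{u,\bsk}(b^m)$ using the three-case analysis in Corollary~\ref{cor:gain-0ed}. Since the block affine matrix scramble in base $\bsb = (b^{e_1},\dots,b^{e_d})$ has two-fold uniformity, \eqref{eq:variance-gain} applies with $n=b^m$, giving
\[
\Var \left(\dfrac{1}{b^m}\sum_{i=0}^{b^m-1}f(\tilde{\bsx}_i) \right)
= \sum_{\emptyset \neq u \subseteq 1{:}d} \sum_{\bsk \in \NN_0^{|u|}} \dfrac{G_{u,\bsk}(b^m)}{b^m}\sigma_{u,\bsk}^2.
\]

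Next I would split the inner sum according to the three cases of Corollary~\ref{cor:gain-0ed}. Terms with $\sum_{j \in u} e_j(k_j+1) \le m$ vanish outright because $G_{u,\bsk}(b^m)=0$ there, so the sum may be restricted to pairs $(u,\bsk)$ satisfying $\sum_{j \in u} e_j(k_j+1) > m$. This restricted range is exactly the union of the second case ($\sum_{j\in u}e_jk_j < m < \sum_{j\in u}e_j(k_j+1)$) and the third case ($\sum_{j\in u}e_jk_j \ge m$, which forces $\sum_{j\in u}e_j(k_j+1) > m$ since $\sum_{j\in u}e_j \ge |u| \ge 1$).

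For the retained terms, in the second case $G_{u,\bsk}(b^m)\le \Gamma_u$ directly, while in the third case $G_{u,\bsk}(b^m)=1$. To combine them under a common bound, I would observe that $\Gamma_u \ge 1$, since by definition \eqref{eq:Gamma_u} it is a product of factors of the form $b^{e_j}/(b^{e_j}-1) > 1$ (with the empty product convention yielding $\Gamma_u=1$ when $|u|=1$). Hence $G_{u,\bsk}(b^m) \le \Gamma_u$ uniformly over the retained range, and substituting this bound into the variance identity produces exactly
\[
\Var \left(\dfrac{1}{b^m}\sum_{i=0}^{b^m-1}f(\tilde{\bsx}_i) \right)
\le \dfrac{1}{b^m} \sum_{\emptyset \neq u \subseteq 1{:}d}  \Gamma_u \sum_{\substack{\bsk\in \NN_0^{|u|}\\ \sum_{j \in u} e_j(k_j+1) > m}}\sigma_{u,\bsk}^2,
\]
as claimed.

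There is no real obstacle: the statement is essentially a direct corollary of Corollary~\ref{cor:gain-0ed} combined with \eqref{eq:variance-gain}. The only minor point requiring a line of justification is the unification of the second and third cases, which relies on the elementary fact $\Gamma_u \ge 1$.
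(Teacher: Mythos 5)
Your proof is correct and follows exactly the route the paper intends (the paper gives no explicit proof, only the remark "From Corollary~\ref{cor:gain-0ed} and Eq.~\eqref{eq:variance-gain}, we have the bound..."). One small simplification: Corollary~\ref{cor:gain-0ed} already provides the uniform bound $G_{u,\bsk}(n)\le\Gamma_u$ for \emph{all} $n$, so you can apply it directly to every non-vanishing term without separately treating cases two and three or invoking $\Gamma_u\ge 1$; your extra case analysis is harmless but unnecessary.
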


Lastly, we show a slow growth of the maximal gain coefficients of the coarsely scrambled Sobol' or Niederreiter sequences.

\begin{corollary}
Let $\Scal$ be the $d$-dimensional Sobol' sequence (in base $b=2$), or the full Niederreiter sequence in prime power base $b$.
Then the gain coefficients for the coarse scramble are $O(\log d)$, specifically,
\[
\Gamma_d := \max_{\emptyset \neq u \subseteq 1{:}d} \sup_{\bsk \in \NN_0^{|u|}} \sup_{n \in \NN} G_{u,\bsk}(n)
\le e \lceil \log_b d + \log_b \log_b(d+b) + 2 \rceil.
\]
\end{corollary}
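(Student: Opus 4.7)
My plan is to leverage Corollary~\ref{cor:maxgain_uk}, which replaces $\sup_n G_{u,\bsk}(n)$ by the explicit quantity $\Gamma_u$ independently of $\bsk$. Since every factor $b^{e_j}/(b^{e_j}-1)$ is at least one, the subset $u$ that maximizes $\Gamma_u$ is $u = 1{:}d$, with $j_m$ any index attaining $\min_j e_j$. For both the Sobol' and the full Niederreiter sequences, a degree-one base polynomial exists (namely $p_1(x)=x$ for Sobol', and each linear $x+c$ with $c\in\Fb$ is monic irreducible for the full Niederreiter), so $e_{j_m}=1$, and after reordering the $e_j$ in nondecreasing order we have
\[
\Gamma_d \;=\; \prod_{j=2}^d \frac{b^{e_j}}{b^{e_j}-1}.
\]

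Let $E := \max_j e_j$, and let $N_n$ denote the number of $j \in \{2,\ldots,d\}$ with $e_j = n$. For both constructions $N_n$ is bounded by the number of monic irreducible (resp.\ primitive) polynomials of degree $n$ over $\Fb$, which is at most $b^n/n$ by Gauss's formula. Using $\log(1+x)\le x$,
\[
\log \Gamma_d \;\le\; \sum_{n=1}^E \frac{N_n}{b^n-1} \;\le\; \sum_{n=1}^E \frac{1}{n(1-b^{-n})} \;=\; \sum_{n=1}^E \frac{1}{n} \;+\; \sum_{n=1}^E \frac{1}{n(b^n-1)},
\]
where the tail $\sum_{n\ge 1} 1/(n(b^n-1))$ is a finite constant $K_b$ depending only on $b$. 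Combined with $\sum_{n=1}^E 1/n \le 1+\ln E$, this gives $\Gamma_d \le e\cdot E\cdot e^{K_b}$; setting $\Delta_b := K_b/\ln b$, equivalently $\Gamma_d \le e(E+\Delta_b)$ after a constant shift.

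To bound $E$, observe that $\sum_{n=1}^{E-1} N_n < d$, otherwise the maximum degree $E$ would not be needed. For monic irreducibles the Gauss-type lower bound $N_n \ge b^n/(2n)$ (for $n$ large enough, with trivial adjustments at small $n$) yields $b^{E-1}/(2(E-1)) < d$, hence $E-1 < \log_b\bigl(2(E-1)d\bigr)$; a short bootstrap then gives $E \le \log_b d + \log_b\log_b(d+b) + O(1)$, where the ``$d+b$'' guards against singularities at small $d$. Absorbing $\Delta_b$ and the $O(1)$ slack into the ``$+2$'' inside the ceiling produces the claimed bound.

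The main obstacle is the bookkeeping of numerical constants: $K_b$ from the tail of the counting sum and the $O(1)$ slack in bounding $E$ must together fit under the explicit ``$+2$''. For the Sobol' case there is an extra subtlety, namely that the primitive-polynomial count $P_b(n)=\varphi(b^n-1)/n$ only satisfies a weaker lower bound of the form $P_b(n) \ge c\,b^n/(n \ln n)$ via Rosser--Schoenfeld estimates on Euler's totient; nonetheless, the bootstrap still delivers an estimate for $E$ of the same shape $\log_b d + \log_b\log_b(d+b) + O(1)$, so the final bound is unaffected.
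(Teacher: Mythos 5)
The overall strategy is right — maximize $\Gamma_u$ over $u$ (which indeed is maximized at $u = 1{:}d$), take logs, bound the sum by counting polynomials by degree, and control the maximal degree — but the execution leaves a genuine gap in producing the explicit constants in the stated bound.

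First, the paper uses the inequality $n\,I_b(n) \le b^n - 1$ for $n \ge 2$ (together with $N_1 \le I_b(1) - 1 = b-1$, because one degree-1 index is excluded as $j_m$). This is chosen precisely so that $\frac{N_n}{b^n-1} \le \frac{1}{n}$ with no leftover, giving $\log\Gamma_d \le \sum_{n=1}^{M} 1/n \le 1 + \log M$. You instead use the weaker $I_b(n) \le b^n/n$, which leaves the extra term $K_b = \sum_n 1/(n(b^n-1))$. The subsequent step ``$\Gamma_d \le e\cdot E\cdot e^{K_b}$; \ldots equivalently $\Gamma_d \le e(E + \Delta_b)$ after a constant shift'' is not valid: a multiplicative factor $e^{K_b}$ cannot be recast as an additive shift by a constant, and it cannot be absorbed into the ``$+2$'' inside the ceiling.

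Second, the bound on the maximal degree $E$ is the crux of the explicit constant, and your argument only delivers $E \le \log_b d + \log_b\log_b(d+b) + O(1)$ via a bootstrap with unspecified slack. The paper does not re-derive this; it cites \cite[Lemma~2]{Wa03}, which gives the sharp estimate $\deg(p_d) \le \log_b d + \log_b\log_b(d+b) + 2$ for the full Niederreiter ordering. Without that exact lemma (or an equivalent explicit derivation), the stated inequality with ``$+2$'' and coefficient $e$ is not reached.

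Third, for the Sobol' case the paper uses a simple domination argument: Sobol' sequences use primitive polynomials, whose degree at each index is at least that of the irreducible polynomial used by the full Niederreiter sequence, and since $b^{e}/(b^{e}-1)$ is decreasing in $e$, the Sobol' $\Gamma_d$ is bounded by the Niederreiter $\Gamma_d$. Your route — separately lower-bounding the number of primitive polynomials via $\varphi(b^n-1)$ and Rosser--Schoenfeld — is a needless complication and introduces its own constants; the domination argument avoids it entirely.

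So: right direction, but the arithmetic has to be tightened. Use $n I_b(n) \le b^n - 1$, cite (or prove) the $\deg(p_d)$ bound from \cite{Wa03} with the explicit $+2$, and reduce Sobol' to Niederreiter by monotonicity in the degrees.
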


\begin{proof}
Because Sobol' sequences use only primitive polynomials, the degrees of their generators are never smaller than those of full Niederreiter sequences.
Consequently, $\Gamma_d$ for Sobol' sequences does not exceed that of full Niederreiter sequences, 
so it suffices to establish the result for the latter.

Let $I_b(n)$ be the number of monic irreducible polynomials in $\Fb$ of degree $n$,
and $N_b(n) := I_b(1) + \cdots + I_b(n)$.
It is well known (see, e.g., \cite[Theorem~3.1.4]{mullen2013handbook}) that, for $n \ge 2$,
\begin{equation*}\label{eq:upperbound_Ib}
nI_b(n) \le  b^n - 1.
\end{equation*}
Using this, $I_b(1) = b$, Corollary~\ref{cor:maxgain_uk}, and $\log(1+x) \le x$,
the gain coefficients of the $N_b(M)$-dimensional Niederreiter sequences, i.e., those using irreducible polynomials of degree up to $M$, are bounded by
\begin{align*}
\log \Gamma_{N_b(M)}
&= \sum_{n=1}^M \left(I_b(n) - \chi(n=1)\right) \log \left(1 + \dfrac{1}{b^n-1} \right) \\
&\le \sum_{n=1}^M \dfrac{b^n-1}{n} \dfrac{1}{b^n-1}
= \sum_{n=1}^M \dfrac{1}{n}
\le 1 + \int_{1}^M \dfrac{1}{x} \,dx = 1 + \log M.
\end{align*}
Let $p_1(x), p_2(x), \dots$ be all monic irreducible polynomials over $\Fb$,
listed in order of nondecreasing degree.
From \cite[Lemma~2]{Wa03}, for any $d \in \NN$ we have
\[
\deg(p_d(x)) \le \log_b d + \log_b \log_b(d+b) + 2,
\]
and thus $d \le N_b(M^*)$ holds with $M^* = \lceil \log_b d + \log_b \log_b(d+b) + 2 \rceil$.
Hence
\[
\Gamma_d
\le \Gamma_{N_b(M^*)}
\le \exp(1 + \log M^*)
= e \lceil \log_b d + \log_b \log_b(d+b) + 2 \rceil.
\]
Thus we are done.
\end{proof}

This logarithmic growth of the maximal gain coefficient stands in stark contrast to the exponential growth observed for standard Owen scrambling of Sobol' sequences, highlighting the primary theoretical advantage of coarse scrambling in high dimensions for the worst case.

\section{Comparison of coarse and usual scrambling}\label{sec:comparison}
In this section, we compare the coarse scrambling and the usual scrambling in base $b$.
We denote the two bases of interest as the coarse base,
$\bsb^{(\bcoarse)} := (b^{e_1}, \dots, b^{e_d}),$
and the usual base,
$\bsb^{(\busual)} := (b, \dots, b).$
Hereafter, superscripts or subscripts C and U will refer to quantities related to the coarse and usual bases, respectively.
We first give a relation between the nested ANOVA decomposition in coarse and usual bases.

\begin{lemma}\label{lem:beta-basechange}
Let $b,e_1,\dots,e_d \in \NN$ with $b \ge 2$.
Let $f \colon [0,1)^d \to \RR$ be a function, and
$\beta_{u,\bsk}^{(\bcoarse)}$ and 
$\beta_{u,\bsk}^{(\busual)}$ be the components of the nested ANOVA decomposition of $f$ in base $\bsb^{(\bcoarse)}$ and $\bsb^{(\busual)}$ as given in \eqref{eq:ANOVA}.
Then, for any $\emptyset \neq u \subseteq 1{:}d$ and $\bsk \in \NN_0^{|u|}$ we have
\begin{equation}\label{eq:beta_basechange}
\beta_{u,\bsk}^{(\bcoarse)}(\bsx) = \sum_{\substack{\bsl=(l_j) \in \NN_0^{|u|} \\ e_jk_j \le l_j < e_j(k_j+1) \, (\forall j \in u)}} \beta_{u,\bsl}^{(\busual)}(\bsx)
\end{equation}
and thus
\begin{equation}\label{eq:sigma_basechange}
(\sigma_{u,\bsk}^{(\bcoarse)})^2 = \sum_{\substack{\bsl=(l_j) \in \NN_0^{|u|} \\ e_jk_j \le l_j < e_j(k_j+1) \, (\forall j \in u)}} (\sigma_{u,\bsl}^{(\busual)})^2.
\end{equation}
\end{lemma}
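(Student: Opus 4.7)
The plan is to derive \eqref{eq:beta_basechange} from the averaging identity \eqref{eq:beta2} by matching local averages of $f$ over the finest elementary boxes in the two bases, then extract $\beta^{(\bcoarse)}_{u,\bsk}$ by induction on $\bsk$. Identity \eqref{eq:sigma_basechange} would then follow from the orthogonality of the ANOVA components.

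First I would note that for any $\bsk \in \NN_0^{|u|}$, the coarse box $\BOX^{(\bcoarse)}_{u,u,\bsk}(\bsx_u)$ has sides $b^{-e_j(k_j+1)}$ for $j \in u$, exactly matching $\BOX^{(\busual)}_{u,u,\bsl^{**}}(\bsx_u)$ with $l^{**}_j := e_j(k_j+1)-1$. Since $\tilde{\beta}_{u,u,\bsk}(\bsx_u)$ is, by \eqref{eq:beta_tilde}, the average of $f$ over this box (after integrating out the coordinates outside $u$), both averages coincide:
\[
\tilde{\beta}^{(\bcoarse)}_{u,u,\bsk}(\bsx_u) = \tilde{\beta}^{(\busual)}_{u,u,\bsl^{**}}(\bsx_u).
\]
Applying \eqref{eq:beta2} on each side then yields
\[
\sum_{\bsk'\le\bsk} \beta^{(\bcoarse)}_{u,\bsk'}(\bsx_u) = \sum_{\bsl \le \bsl^{**}} \beta^{(\busual)}_{u,\bsl}(\bsx_u),
\]
with componentwise inequalities.

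Next I would decompose the index set on the right as a disjoint union
\[
\{\bsl \in \NN_0^{|u|} : \bsl \le \bsl^{**}\} = \bigsqcup_{\bsk' \le \bsk} R(\bsk'), \qquad R(\bsk') := \{\bsl : e_j k'_j \le l_j < e_j(k'_j+1)\ \forall j \in u\},
\]
which reduces coordinatewise to $\{0,\dots,e_j(k_j+1)-1\} = \bigsqcup_{k'_j=0}^{k_j}\{e_jk'_j,\dots,e_j(k'_j+1)-1\}$. Then \eqref{eq:beta_basechange} would follow by induction on $\bsk$ in the componentwise order: the base case $\bsk=\bszero$ reduces to $\beta^{(\bcoarse)}_{u,\bszero} = \sum_{\bsl \in R(\bszero)}\beta^{(\busual)}_{u,\bsl}$, and the inductive step subtracts the hypothesis for each $\bsk'<\bsk$ from the combined identity above, isolating $\beta^{(\bcoarse)}_{u,\bsk}$ on the left and $\sum_{\bsl \in R(\bsk)}\beta^{(\busual)}_{u,\bsl}$ on the right.

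For \eqref{eq:sigma_basechange} I would take variances on both sides of \eqref{eq:beta_basechange}; since the components $\{\beta^{(\busual)}_{u,\bsl}\}_{\bsl}$ are mutually orthogonal in $L^2$ (as established in Appendix~\ref{sec:anova-equiv}), the variance of the sum equals the sum of variances. The only subtle point is the combinatorial peeling: one must recognize that the rectangular lattice $\{\bsl \le \bsl^{**}\}$ partitions as a product of one-dimensional partitions indexed by the $\bsk' \le \bsk$. Once the averages are matched and this partition is in place, the rest is mechanical; equivalently, one could apply Möbius inversion on the lattice $\NN_0^{|u|}$ in a single stroke and avoid the explicit induction entirely.
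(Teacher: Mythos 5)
Your proof is correct but takes a genuinely different route from the paper's. Both arguments hinge on the same geometric observation---an elementary box in the coarse base $\bsb^{(\bcoarse)}$ coincides with a suitable elementary box in the usual base $\bsb^{(\busual)}$---but they package the combinatorics differently. The paper matches boxes for \emph{every} subset $v \subseteq u$, not just $v=u$, so that it can plug the resulting identities directly into the alternating-sum definition \eqref{eq:beta} of $\beta_{u,\bsk}^{(\bcoarse)}$: starting from the right-hand side of \eqref{eq:beta_basechange}, it applies inclusion--exclusion over $v$, converts each inner sum to a $\tilde{\beta}^{(\busual)}_{u,u,\cdot}$ via \eqref{eq:beta2}, swaps bases via the box identity, and lands on $\beta_{u,\bsk}^{(\bcoarse)}$ in a single chain of equalities with no separate induction. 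You instead match only the maximal box ($v=u$), apply \eqref{eq:beta2} symmetrically on both sides to obtain $\sum_{\bsk'\le\bsk}\beta^{(\bcoarse)}_{u,\bsk'} = \sum_{\bsl\le\bsl^{**}}\beta^{(\busual)}_{u,\bsl}$ with $l^{**}_j = e_j(k_j+1)-1$, and then recover the individual components by inverting this relation over the locally finite product poset $\NN_0^{|u|}$, using the coordinatewise partition $\{\bsl : \bsl\le\bsl^{**}\} = \bigsqcup_{\bsk'\le\bsk}R(\bsk')$. Your M\"obius inversion (equivalently, well-founded induction, which is legitimate since every interval $[\bszero,\bsk]$ is finite) plays exactly the role that the paper's explicit $\sum_{v\subseteq u}(-1)^{|u|-|v|}$ expansion plays; you trade the full family of box identities for a single one at the cost of a separate combinatorial peeling step. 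The final appeal to $L^2$-orthogonality for \eqref{eq:sigma_basechange} is the same in both.
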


\begin{proof}
The definition of $\BOX$ \eqref{eq:BOX} implies that, for any $\bsx \in [0,1)^{|u|}$
\[
\BOX_{u,u,(e_j(k_j+\chi(j \in v))-1)_j}^{(\busual)}(\bsx)
= 
\BOX_{u,v,\bsk}^{(\bcoarse)}(\bsx)
\]
and thus
\[
\tilde{\beta}_{u,u,(e_j(k_j+\chi(j \in v)-1))_j}^{(\busual)}(\bsx)
= \tilde{\beta}_{u,v,\bsk}^{(\bcoarse)}(\bsx).
\]
Then, by inclusion-exclusion principle, \eqref{eq:beta2}, 
the above equation, and Eq.~\eqref{eq:beta},
we have
\begin{align*}
\sum_{\substack{\bsl=(l_j) \in \NN_0^{|u|} \\ e_jk_j \le l_j < e_j(k_j+1) \, (\forall j \in u)}} \beta_{u,\bsl}^{(\busual)}(\bsx)
&= \sum_{v \subseteq u} (-1)^{|u|-|v|}
\sum_{\substack{\bsl=(l_j) \in \NN_0^{|u|} \\ l_j < e_j(k_j+\chi(j \in v)) \, (\forall j \in u)}} \beta_{u,\bsl}^{(\busual)}(\bsx)\\
&= \sum_{v \subseteq u} (-1)^{|u|-|v|}
\tilde{\beta}_{u,u,(e_j(k_j+\chi(j \in v))-1)_j}^{(\busual)}(\bsx)\\
&= \sum_{v \subseteq u} (-1)^{|u|-|v|} \tilde{\beta}_{u,v,\bsk}^{(\bcoarse)}(\bsx_u)\\
&= \beta_{u,\bsk}^{(\bcoarse)}(\bsx_u),
\end{align*}
Thus we have the first statement.
The second one follows from the fact that $\beta$'s are orthogonal.
\end{proof}

Hereafter, let $\Scal$ be a $(0,\bse,d)$-sequence in base $b$.
We denote the variance of the estimator using points scrambled in the coarse and usual bases by $\sigma^2_C(n)$ and $\sigma^2_U(n)$, respectively:
\begin{align}
    \Vcoarse(n) &:= \Var\left(\frac{1}{n}\sum_{i=0}^{n-1}f(\tilde{\bsx}_i^{(C)})\right), \label{eq:var_coarse} \\
    \Vusual(n) &:= \Var\left(\frac{1}{n}\sum_{i=0}^{n-1}f(\tilde{\bsx}_i^{(U)})\right). \label{eq:var_usual}
\end{align}
Lemma~\ref{lem:beta-basechange} implies that 
\begin{equation} \label{eq:sigma-seq-basechange}
\Vcoarse(n)
= \frac{1}{n}\sum_{\emptyset \neq u \subseteq 1{:}d} \sum_{\bsk \in \NN_0^{|u|}} G_{u,\bsk}^{(\bcoarse)}(n) (\sigma_{u,\bsk}^{(\bcoarse)})^2
= \frac{1}{n}\sum_{\emptyset \neq u \subseteq 1{:}d} \sum_{\bsl \in \NN_0^{|u|}} G_{u,(\lfloor l_j/e_j \rfloor)_{j \in u}}^{(\bcoarse)}(n) (\sigma_{u,\bsl}^{(\busual)})^2.    
\end{equation}
Furthermore, for $n=b^m$,
Lemma~\ref{lem:beta-basechange} combining with Corollary~\ref{cor:gain-0ed} and the inequality
\[
e_j(\lfloor l_j/e_j \rfloor + 1)
\le e_j(l_j/e_j + 1)
= l_j + e_j
\]
implies that
\begin{align}
\Vcoarse(b^m)
&= \frac{1}{b^m}\sum_{\emptyset \neq u \subseteq 1{:}d} \sum_{\substack{\bsk \in \NN_0^{|u|} \\ \sum_{j \in u} e_j(k_j+1) > m}} G_{u,\bsk}^{(\bcoarse)}(b^m) (\sigma_{u,\bsk}^{(\bcoarse)})^2 \notag\\
&= \frac{1}{b^m}\sum_{\emptyset \neq u \subseteq 1{:}d} \sum_{\substack{\bsl \in \NN_0^{|u|} \\ \sum_{j \in u} e_j(\lfloor l_j/e_j \rfloor + 1)  > m}} G_{u,(\lfloor l_j/e_j \rfloor)_{j \in u}}^{(\bcoarse)}(b^m) (\sigma_{u,\bsl}^{(\busual)})^2 \label{eq:general_bound_coarse_complex}\\
&\le \frac{1}{b^m}\sum_{\emptyset \neq u \subseteq 1{:}d} \Gamma_u \sum_{\substack{\bsl \in \NN_0^{|u|} \\ \sum_{j \in u} l_j > m - \sum_{j \in u} e_j}}  (\sigma_{u,\bsl}^{(\busual)})^2, \label{eq:general_bound_coarse}
\end{align}
where $\Gamma_u$ is defined as in \eqref{eq:Gamma_u}.
Let us compare \eqref{eq:general_bound_coarse} with the general bound for the usual scramble of the (digital) $(t,m,d)$-net (see \cite{PO21, GS22}) as
\begin{equation}\label{eq:general_bound_dense}
\Vusual(b^m)
\le \frac{1}{b^m}\sum_{\emptyset \neq u \subseteq 1{:}d} b^{t_u}\left(\dfrac{b}{b-1}\right)^{|u|-1} \sum_{\substack{\bsl \in \NN_0^{|u|} \\ \sum_{j \in u} l_j > m - t_u}} (\sigma_{u,\bsl}^{(\busual)})^2, 
\end{equation}
where $t_u$ denotes the $t$-value of the projection of the digital sequence $\Scal$ onto the coordinates in $u$.  
In our case, by Theorem~\ref{thm:t-val-nied} we can take $t_u = \sum_{j \in u}(e_j-1)$.
It follows that the bound \eqref{eq:general_bound_coarse} has more terms, 
but each with a smaller constant factor.  
We can formalize this observation in the following ``meta'' theorem.
This can be applied to \cite[Theorem~2]{Ow98}
and we obtain a corresponding corollary.

\begin{theorem}
Let $\Scal$ be a $(0,\bse,d)$-sequence in base b. The variance bound for coarse scrambling \eqref{eq:general_bound_coarse} is strictly tighter than the standard variance bound for usual scrambling \eqref{eq:general_bound_dense} applied to a hypothetical digital $(t,m,s)$-net with
$t_u = \sum_{j \in u} e_j$.
A direct consequence is that any convergence rate proven for usual scrambling that relies solely on the general bound of the form \eqref{eq:general_bound_dense} also holds for coarse scrambling.
\end{theorem}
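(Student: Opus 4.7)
The plan is to compare the two variance bounds term by term, by first aligning their summation ranges and then comparing their leading constants.

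The first step is to observe that the bound \eqref{eq:general_bound_coarse} and the bound \eqref{eq:general_bound_dense} both have the form
\[
\sum_{\emptyset \neq u \subseteq 1{:}d} K_u \sum_{\bsl \in S_u} (\sigma_{u,\bsl}^{(\busual)})^2,
\]
so the comparison reduces to two separate ingredients: the summation ranges $S_u$, and the constants $K_u$. The key is that the hypothetical $t_u = \sum_{j \in u} e_j$ has been chosen precisely so that the index sets agree. Indeed, the condition $\sum_{j \in u} l_j > m - t_u$ in \eqref{eq:general_bound_dense} becomes $\sum_{j \in u} l_j > m - \sum_{j \in u} e_j$, which is exactly the condition appearing in \eqref{eq:general_bound_coarse}. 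Thus the two sums over $\bsl$ coincide, and only the coefficients need to be compared.

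The second step is to show $\Gamma_u < b^{t_u}\left(\tfrac{b}{b-1}\right)^{|u|-1}$ for every nonempty $u \subseteq 1{:}d$. Using the elementary fact that $x \mapsto x/(x-1)$ is strictly decreasing on $(1,\infty)$, for each $j \in u \setminus \{j_m\}$ one has $\frac{b^{e_j}}{b^{e_j}-1} \le \frac{b}{b-1}$. Taking the product over $j \in u \setminus \{j_m\}$ yields $\Gamma_u \le \left(\frac{b}{b-1}\right)^{|u|-1}$. Combining this with $b^{t_u} \ge b \ge 2$ (which holds since each $e_j \ge 1$ and $u \ne \emptyset$) gives the strict inequality $\Gamma_u < b^{t_u}\left(\tfrac{b}{b-1}\right)^{|u|-1}$, as required.

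The third step simply assembles the two ingredients: every nonnegative summand of \eqref{eq:general_bound_coarse} is strictly dominated by the corresponding summand of \eqref{eq:general_bound_dense} (under the assumed $t_u$), so the overall coarse bound is strictly tighter (the inequality is strict whenever at least one of the variance components in the relevant range is nonzero; otherwise both bounds are zero and the claim is trivial). For the second, corollary-style assertion, note that any convergence rate result that uses only the bound \eqref{eq:general_bound_dense} treats it as a black box of the form above. Since \eqref{eq:general_bound_coarse} has the identical black-box structure but with smaller constants, the very same chain of estimates, applied verbatim, produces at least as good a rate for coarse scrambling.

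The main obstacle is conceptual rather than technical: one must notice that the two bounds can be made structurally identical by taking $t_u = \sum_{j \in u} e_j$ rather than the true $t$-value $\sum_{j \in u}(e_j-1)$ of the generalized Niederreiter sequence. Using this inflated $t_u$ is what makes the summation ranges match; once that is observed, the remaining inequality on constants is elementary.
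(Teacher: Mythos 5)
Your proposal is correct and is essentially the formalization the paper intends; the paper presents this as a ``meta'' observation without giving an explicit proof, and your two-ingredient comparison (matching index sets via the inflated $t_u = \sum_{j\in u} e_j$, then comparing constants $\Gamma_u < b^{t_u}(b/(b-1))^{|u|-1}$) is exactly the right way to make it rigorous. Your handling of strictness — the constants are strictly smaller, so the overall bound is strictly smaller unless every variance component in the range vanishes — is also the natural reading of the theorem's ``strictly tighter.''
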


\begin{corollary}
Let $\Scal$ be a $(0,\bse,d)$-sequence in base $b$.
Assume that $\partial^df/(\partial x_1 \cdots \partial x_d)$ exists and it is Lipschitz  continuous.
Then, the variance $\Vcoarse(n)$ of the estimator based on coarse scrambling with $n=b^m$,
given in \eqref{eq:var_coarse},
decays as $O(n^{-3} (\log n)^{d-1})$.
\end{corollary}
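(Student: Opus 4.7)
The plan is to combine the coarse variance bound \eqref{eq:general_bound_coarse} with the classical smoothness estimates for the nested ANOVA components. The meta-theorem immediately preceding the corollary already does the structural heavy lifting: it reduces the task to verifying that Owen's convergence analysis for smooth integrands \cite[Theorem~2]{Ow98} applies to the coarse variance bound, which has exactly the same form as \eqref{eq:general_bound_dense} with the effective threshold $m - \sum_{j\in u} e_j$ in place of $m - t_u$.

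First I would invoke the standard smoothness estimate for the ANOVA variance components in the usual base $\bsb^{(\busual)}$. Under the hypothesis that $\partial^d f/(\partial x_1\cdots\partial x_d)$ is Lipschitz continuous, the locally-averaged representation \eqref{eq:beta}--\eqref{eq:beta_tilde} combined with a Taylor expansion on each elementary cube of side lengths $b^{-l_j}$ yields a pointwise bound of the form
\[
(\sigma_{u,\bsl}^{(\busual)})^2 \le C_f \cdot b^{-3\sum_{j \in u}(l_j+1)},
\]
with $C_f$ depending only on $f$, $b$, and $d$. This is the cornerstone of the classical higher-order scrambling analysis in \cite{Ow97b,Ow98}.

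Next I substitute this decay into \eqref{eq:general_bound_coarse}. For each nonempty $u \subseteq 1{:}d$, set $M_u := m - \sum_{j\in u} e_j$. A standard count shows that the number of $\bsl \in \NN_0^{|u|}$ with $\sum_{j\in u} l_j = L$ is $\binom{L+|u|-1}{|u|-1} = O(L^{|u|-1})$, so summing a geometric series gives
\[
\sum_{\substack{\bsl \in \NN_0^{|u|} \\ \sum_{j \in u} l_j > M_u}} b^{-3\sum_{j\in u} l_j} = O\bigl(M_u^{|u|-1} b^{-3M_u}\bigr) = O\bigl(m^{|u|-1} b^{-3m}\bigr),
\]
since the factor $b^{3\sum_{j\in u} e_j}$ is an $m$-independent constant depending only on $\bse$. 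Multiplying by the finite constant $\Gamma_u$ from \eqref{eq:Gamma_u} and summing over the $2^d - 1$ nonempty subsets $u$ keeps the overall order at $O(m^{d-1} b^{-3m})$, with the term $|u|=d$ dominating. Since $n = b^m$, i.e.\ $m = \log_b n$, this yields $\Vcoarse(n) = O(n^{-3}(\log n)^{d-1})$, as required.

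The main obstacle I anticipate is not the combinatorial summation, which is routine, but the careful verification of the $b^{-3\sum_{j\in u}(l_j+1)}$ decay of $(\sigma_{u,\bsl}^{(\busual)})^2$ in the present framework. Our ANOVA components are defined via block-averages over elementary intervals (Eq.~\eqref{eq:beta_tilde}), rather than through an explicit Haar or Walsh expansion, so one must invoke the equivalence established in Appendix~\ref{sec:anova-equiv} in order to transfer Owen's pointwise smoothness bound. Once this correspondence is in hand, the argument is a direct application of the meta-theorem and the summation estimate above.
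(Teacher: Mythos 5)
Your proposal is correct and follows the same logical path the paper intends: combine the coarse variance bound \eqref{eq:general_bound_coarse} with Owen's smoothness estimate for the ANOVA variance components and the standard combinatorial count. The paper presents this corollary as an immediate consequence of the preceding meta-theorem together with \cite[Theorem~2]{Ow98}; your argument simply unfolds that citation into an explicit derivation, which matches what Owen's theorem does internally.
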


Let us proceed more detailed comparison.
Comparing \eqref{eq:general_bound_coarse} and \eqref{eq:general_bound_dense},
The variance $\Vcoarse(b^m)$ of the estimator based on coarse scrambling may be larger for subsets $u$ with small $t_u$ and small cardinality $|u|$.  
In some cases, it can be shown that the precise $t$-value satisfies 
$t_u = \sum_{j \in u} (e_j-1)$ \cite{dick2008exact}, 
although this does not hold for all subsets.  

The most important case is the one-dimensional projection, i.e., $u = \{j\}$.  
For such one-dimensional projections, the Sobol' sequence (or, more generally, Niederreiter sequences 
with non-singular upper-triangular generating matrices) forms a $(0,1)$-sequence in base $b$, i.e., $t_{\{j\}} = 0$.
Thus the variance for the $\{j\}$-component for the usual scramble is given by
\begin{equation}\label{eq:1dim-dense}
\sum_{l \ge m} (\sigma_{\{j\},l}^{(\busual)})^2.
\end{equation}
On the other hand, for the coarse scramble, 
Corollary~\ref{cor:gain-0ed}
and \eqref{eq:general_bound_coarse_complex} imply that
the variance for the $\{j\}$-component is bounded by
\begin{equation}\label{eq:1dim-coarse}
\sum_{(\lfloor l/e_j \rfloor + 1)e_j > m} G_{\{j\},(\lfloor l_j/e_j \rfloor)}^{(\bcoarse)}(b^m) (\sigma_{\{j\},l}^{(\busual)})^2
\le \sum_{l \ge \lfloor m/e_j \rfloor e_j}  (\sigma_{\{j\},l}^{(\busual)})^2
\end{equation}
Thus, since $l \ge m$ implies $l \ge \lfloor m/e_j \rfloor e_j$,
the exact variance for the usual scramble in \eqref{eq:1dim-dense} is no greater than
the bound for the coarse scramble in \eqref{eq:1dim-coarse}.
This means that, for one-dimensional projections, the coarse scramble can be significantly worse than the usual one.
To avoid this drawback, we may choose $m$ to be a multiple of $e_j$,
in which case the two bounds coincide.
This avoidance is observed in our numerical experiment.

In practical applications, such as those in finance, integrands often have what is broadly termed a low effective dimension \cite{Lbook}. This notion is typically specified in two main ways. The first is having a \textit{low superposition dimension}, where the variance is dominated by interactions among a small number of variables (small $|u|$). The second is having a \textit{low truncation dimension}, where the variance is concentrated in the first few coordinates.
As our analysis shows, the trade-off between coarse and usual scrambling depends on the integrand's effective dimension. For functions with a low superposition dimension, where low-order interactions (especially 1D projections) dominate, usual scrambling is superior due to its perfect $t=0$ property on 1D components.
For functions with a low truncation dimension in high-dimensional settings, where the influence of higher-order interactions is non-negligible, the superior $O(\log d)$ scaling of the gain coefficient $\Gamma_u$ for coarse scrambling can become the decisive advantage, making it a compelling alternative.

\section{Numerical Experiments}\label{sec:experiments}

To empirically validate our theoretical findings, we conduct three numerical integration experiments comparing the performance of coarse scrambling with usual scrambling on Sobol' sequences.

\subsection{Experimental Setup}

For both scrambling strategies, we estimate the integral of test functions using a RQMC estimator.
In all experiments involving Sobol' sequences, we use the construction of Joe and Kuo \cite{JK08}.
The number of sample points is set to $n=2^m$, where $m$ ranges from 1 to 16. To assess the estimator's stability, we compute the root mean square error (RMSE) over $R=200$ independent replications for each value of $n$. For sufficiently smooth integrands, theory predicts a convergence rate of approximately $O(n^{-1.5})$ for the RMSE. The usual and coarse scrambling methods are implemented using affine matrix scramble and block affine matrix scramble, respectively.

We use three distinct test functions designed to probe different aspects of the scrambling methods:
\begin{enumerate}
    \item A \textbf{37-dimensional linear function}. Its variance is entirely contained in one-dimensional projections ($|u|=1$), giving it a low superposition dimension.

    \item A \textbf{100-dimensional weighted smooth function}. This function is designed to have a low truncation dimension.

    \item \textbf{A 19-dimensional function unfavorable to usual scrambling}. 
    This function is designed so that the usual scrambled Sobol' estimator exhibits an unusually large gain coefficients at the sample size considered.
\end{enumerate}
The results, presented as log-log plots of the RMSE against the number of points $n$, are shown in  Figures~\ref{fig:RQMC_linear}--\ref{fig:RQMC_anti}.

\begin{figure}[ht]
    \centering
    \includegraphics[width=\linewidth]{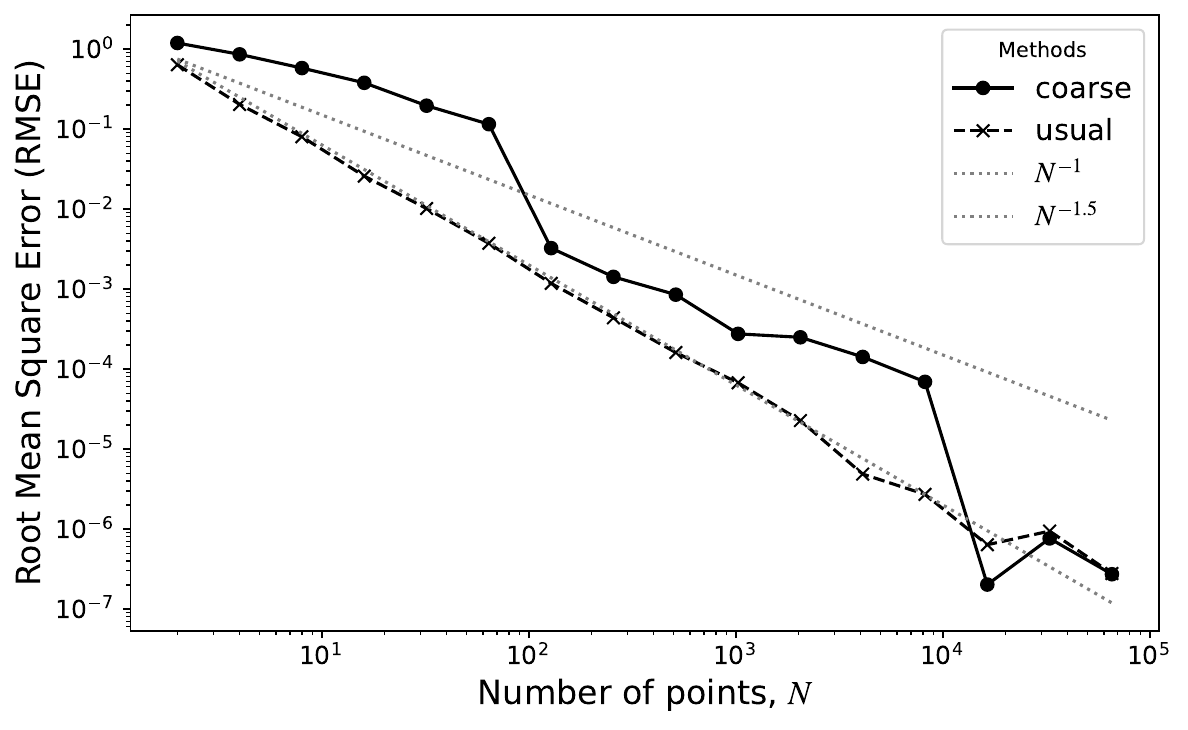}
    \caption{Log-log plot of the number of points vs.\ the root mean square error for coarse (block affine) and usual affine scrambling for the linear function ($d=37$). Reference lines for $O(n^{-1})$ and $O(n^{-1.5})$ are included.}
    \label{fig:RQMC_linear}
\end{figure}

\begin{figure}[ht]
    \centering
    \includegraphics[width=\linewidth]{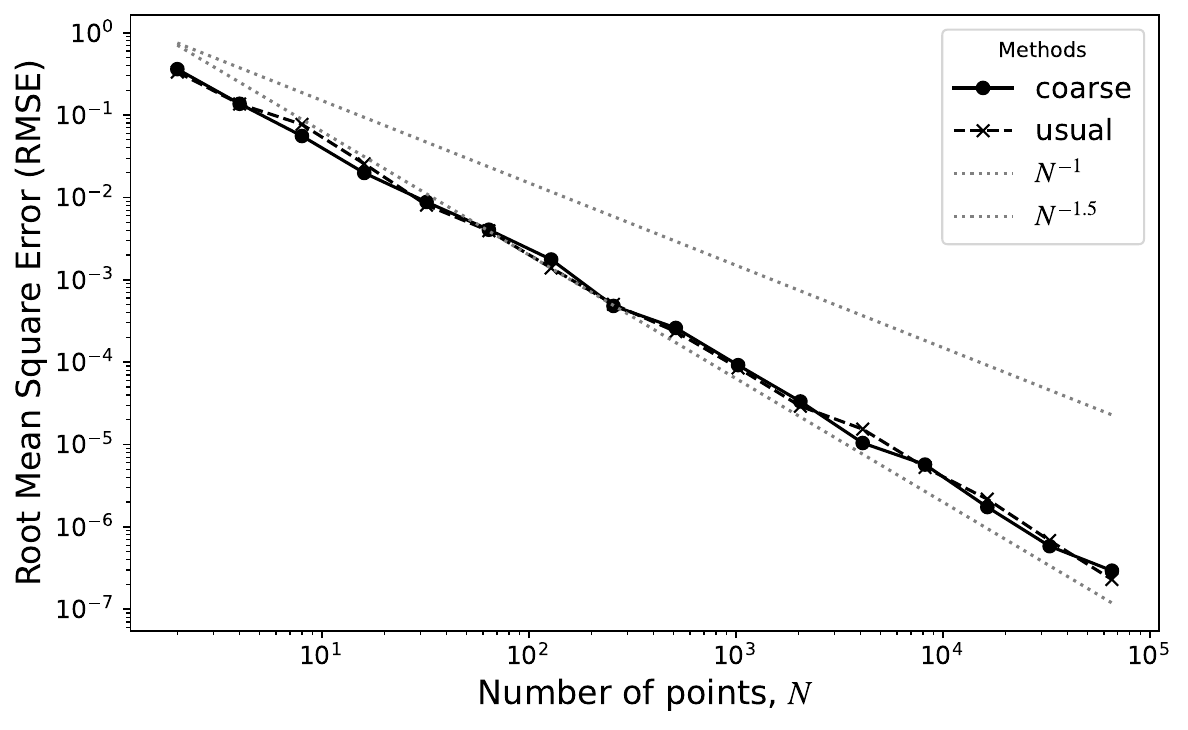}
    \caption{Log-log plot of the number of points vs.\ the root mean square error for coarse (block affine) and usual affine scrambling for the weighted smooth function ($d=100$). Reference lines for $O(n^{-1})$ and $O(n^{-1.5})$ are included.}
    \label{fig:RQMC_weighted}
\end{figure}

\begin{figure}[ht]
    \centering
    \includegraphics[width=\linewidth]{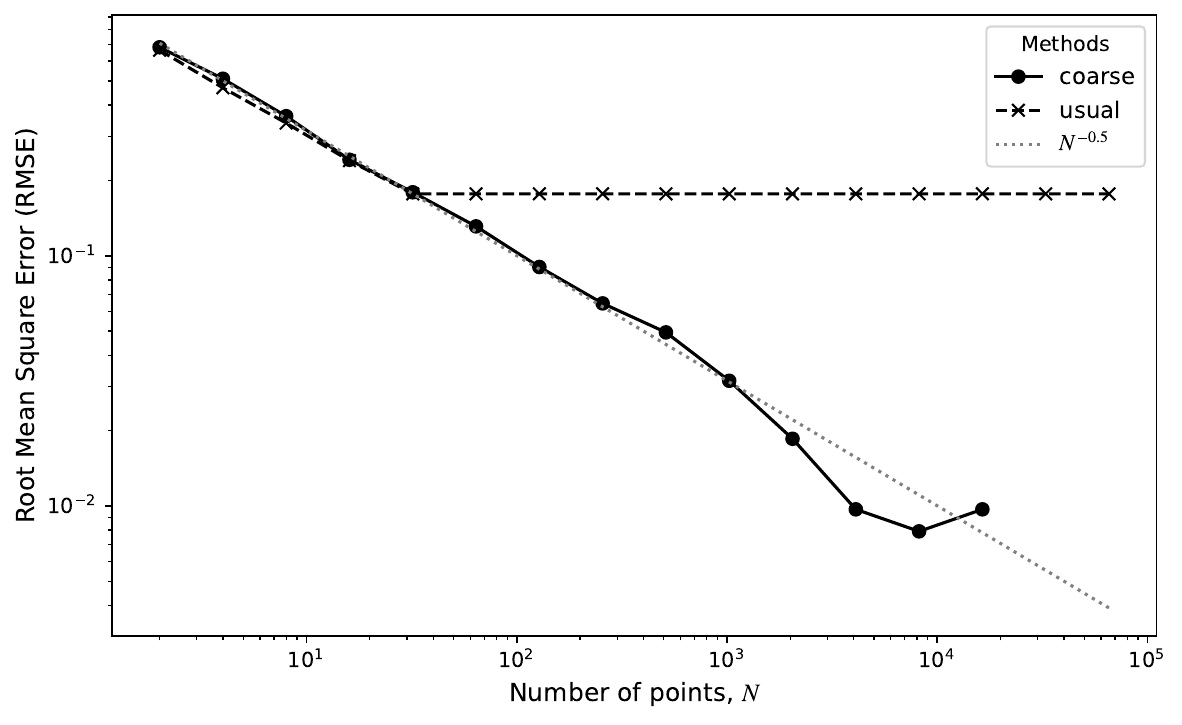}
    \caption{Log-log plot of the number of points vs.\ the root mean square error for coarse (block affine) and usual affine scrambling for the 19-dimensional function unfavorable to usual scrambling. A reference line for $O(n^{-0.5})$ is included. For $n=2^{15}$ and $2^{16}$, the RMSE for coarse scrambling is omitted, since over $R=200$ independent trials the estimator always equaled the exact integral value.}
    \label{fig:RQMC_anti}
\end{figure}

\subsection{Results and Discussion}

\subsubsection{Linear Function}
Our first test integrand is the 37-dimensional linear function:
\[
f(\bsx) = \sum_{j=1}^{37} x_j.
\]
The dimension $d=37$ was chosen because for the Sobol' sequence, the corresponding generator polynomial degrees result in block sizes $e_j$ up to 7 (see Table~\ref{table:degree}).

As shown in Figure~\ref{fig:RQMC_linear}, the results align perfectly with the theoretical analysis in Section~\ref{sec:comparison}. The usual scrambling method consistently outperforms coarse scrambling, exhibiting a smooth convergence rate close to $O(n^{-1.5})$. In contrast, the coarse scrambling method shows a stepped convergence.
The error convergence follows a baseline rate of approximately $O(n^{-1})$,
superimposed with significant drops when $m$ is a multiple of 7 (i.e., at $n=2^7$ and $n=2^{14}$). This confirms our theoretical prediction: for one-dimensional projections, the error bound for coarse scrambling only matches that of usual scrambling when the number of points is chosen appropriately relative to the block sizes. Since many dimensions have a block size of $e_j=7$, choosing $m$ as a multiple of 7 synchronizes the error reduction across these dimensions, leading to the observed drops.

\subsubsection{Weighted Smooth Function}
Next, we consider the 100-dimensional weighted function:
\[
g(\bsx) = \prod_{j=1}^{100} \left(1+\frac{1}{j^2}(x_j e^{x_j}-1)\right).
\]
This integrand is dominated by its first few dimensions.
For $d=100$, roughly half of the dimensions in the Sobol' sequence correspond to a block size of $e_j=9$.

The results in Figure~\ref{fig:RQMC_weighted} show that both scrambling methods perform comparably, with convergence rates faster than $O(n^{-1})$ but slightly slower than the ideal $O(n^{-1.5})$. In this scenario, the potential drawbacks of coarse scrambling are negligible. The function's variance is concentrated in the low-index dimensions, which have small block sizes ($e_j=1, 2, \dots$). The performance degradation from the larger block sizes associated with high-index dimensions is insignificant because these dimensions contribute very little to the total variance. This suggests that for functions with low truncation dimension,
coarse scrambling provides a comparable level of accuracy to usual scrambling.

\subsubsection{A function unfavorable to usual scrambling}
First, define the (leftmost) level-$\ell$ Haar wavelet by
\[
h_\ell(x) =
\begin{cases}
2^{\ell/2}, & 0 \le x < 2^{-\ell-1},\\
-2^{\ell/2}, & 2^{-\ell-1} \le x < 2^{-\ell},\\
0, & \text{otherwise.}
\end{cases}
\]
We then define the $19$-dimensional function $h$, our third test integrand, by
\[
h(\bsx)
= h_1(x_{13})h_1(x_{14})h_2(x_{15})h_0(x_{16})h_0(x_{17})h_0(x_{18})h_1(x_{19}).
\]
Thus, the first $12$ dimensions do not affect the function value, and $h$ depends
only on $7$ dimensions, i.e., from $x_{13}$ to $x_{19}$,
whose corresponding block size is $5$ or $6$.
This test function is constructed as a normalized linear combination of Walsh functions so that all of its nonzero Walsh coefficients lie in a single $(u,\bsk)$ block, for which the gain coefficient under the standard scrambling of the Sobol' nets constructed by Joe and Kuo~\cite{JK08} is very large at the sample size considered.
More precisely, we take $u=\{13,\ldots,19\}$ and $\bsk=(1,1,2,0,0,0,1)$.
For $n=2^{16}$, the corresponding gain coefficient equals $2048$ (see \cite{PO21} for the computation).
Since the function is normalized so that $\Var h=1$, the crude Monte Carlo variance is $1/n$.
Hence, at $n=2^{16}$ the predicted RMSE is
\[
\sqrt{2048/2^{16}}=2^{-5/2}\approx 0.1768.
\]

The behavior shown in Figure~\ref{fig:RQMC_anti} is clear.
For the standard scrambled Sobol' nets, the RMSE does not decrease as $n$ increases from $2^{5}$ to $2^{16}$, and the observed values agree with the above prediction.
In contrast, for the coarsely scrambled Sobol' nets, the RMSE decays at essentially the same rate as in plain Monte Carlo, namely $n^{-1/2}$, and its magnitude is comparable (i.e., within an approximately constant factor).
For $n=2^{15}$ and $2^{16}$, over $R=200$ independent trials we always obtained the exact integral value (i.e., the estimator returned $0$), so these points are omitted from the plot.
In an additional experiment with $R=2000$ trials, we observed exactly one nonzero estimate in each case. The resulting RMSE was approximately $0.00395$ in both cases. Thus, the variance is not exactly zero, but extremely small in these cases, and the zero errors observed over $R=200$ trials should be understood as a finite-sampling effect.
Overall, for this test function the coarse scrambling is far more effective than the usual scrambling.

\appendix
\section{Equivalence of ANOVA Formulations}\label{sec:anova-equiv}

This appendix serves two purposes. First, we demonstrate that our formulation of the ANOVA decomposition components is equivalent to the standard formulation via the Walsh basis. Second, we prove the identity in \eqref{eq:beta2}.
For simplicity, we consider the case of a common base, i.e., $\bsb = (b,\dots,b)$, as the generalization to the mixed base case is straightforward.

Let $\wal_{\bsh}(\bsx)$ be the $\bsh$-th Walsh function and $\widehat{f}(\bsh)$ be the corresponding Walsh coefficient of $f$, as defined in \cite[Appendix~A]{DP10book}.
The ANOVA components in \cite[Section~13.3.2]{DP10book}, which we denote by $\beta'_{\bsl}(\bsx)$, are defined for $\bsl \in \NN_0^d$ as
\begin{equation}\label{eq:beta-dash}
\beta'_{\bsl}(\bsx) = \sum_{\bsh \in L_{\bsl}} \widehat{f}(\bsh) \wal_{\bsh}(\bsx),
\end{equation}
where the index set $L_{\bsl}$ is given by
\[
L_{\bsl} = \{ \bsh = (h_1,\dots,h_d) \in \NN_0^d \mid b^{l_j-1} \le h_j < b^{l_j} \text{ if } l_j>0, \text{ and } h_j=0 \text{ if } l_j=0 \}.
\]
Our goal is to show that this definition coincides with ours.
Specifically, for an index $\bsl \in \NN_0^d$,
let $u := \{j \in 1{:}d \mid l_j > 0 \}$ and let $\bsk = (l_j-1)_{j \in u}$. We will show that $\beta'_{\bsl}(\bsx) = \beta_{u,\bsk}(\bsx_u)$, where $\beta_{u,\bsk}$ is defined as in \eqref{eq:beta2}.

First, note that for any $j \notin u$, we have $l_j=0$ and thus $h_j=0$ for all $\bsh \in L_{\bsl}$. Since $\wal_0(x_j) = 1$ and $\wal_{\bsh}(\bsx) = \prod_{j=1}^d \wal_{h_j}(x_j)$, the term $\wal_{\bsh}(\bsx)$ does not depend on $x_j$ for $j \notin u$. Consequently, $\beta'_{\bsl}(\bsx)$ is a function of only $\bsx_u$.

The set $L_{\bsl}$ can be expressed using the inclusion-exclusion principle. Let
\[
\tilde{L}_{\bsl,v}
:= \{ \bsh \in \NN_0^d \mid h_j < b^{l_j-\chi(j \in u \setminus v)} \text{ for } 1 \le j \le d\}
\]
for any $v \subseteq u$.
The indicator function for $\bsh \in L_{\bsl}$ can then be written as
\[
\chi(\bsh \in L_{\bsl}) = \sum_{v \subseteq u} (-1)^{|u|-|v|} \chi(\bsh \in \tilde{L}_{\bsl,v}).
\]
Substituting this into \eqref{eq:beta-dash} yields
\begin{equation}\label{eq:betadash-IE}
\beta'_{\bsl}(\bsx)
= \sum_{v \subseteq u}(-1)^{|u|-|v|} \left( \sum_{\bsh \in \tilde{L}_{\bsl,v}} \widehat{f}(\bsh) \wal_{\bsh}(\bsx) \right).
\end{equation}
The inner sum over $\bsh \in \tilde{L}_{\bsl,v}$ is a partial Walsh series of $f$. As shown in the proof of \cite[Theorem A.11]{DP10book}, this sum has an integral representation:
\begin{align}
\sum_{\bsh \in \tilde{L}_{\bsl,v}} \widehat{f}(\bsh) \wal_{\bsh}(\bsx)
&= \int_{[0,1)^d} f(\bsy) \prod_{j=1}^d b^{l_j-\chi(j \in u \setminus v)}\chi\big(x_j \ominus y_j \in [0,b^{-(l_j-\chi(j \in u \setminus v))})\big) \,d\bsy \notag \\
&= \tilde{\beta}_{u,v,\bsk}(\bsx_u), \label{eq:beta-betadash}
\end{align}
where $\ominus$ is the $b$-adic bitwise subtraction. The integral expression in the first line corresponds precisely to the definition of $\tilde{\beta}_{u,v,\bsk}(\bsx_u)$ in our framework (see \eqref{eq:beta_tilde}).

By substituting \eqref{eq:beta-betadash} into \eqref{eq:betadash-IE}, we obtain
\[
\beta'_{\bsl}(\bsx) = \sum_{v \subseteq u}(-1)^{|u|-|v|} \tilde{\beta}_{u,v,\bsk}(\bsx_u).
\]
The right-hand side is, by definition in \eqref{eq:beta},
equal to $\beta_{u,\bsk}(\bsx_u)$.
This establishes the equivalence $\beta'_{\bsl}(\bsx) = \beta_{u,\bsk}(\bsx_u)$ and completes the proof.

Finally, we prove the identity in \eqref{eq:beta2}. For any $u \subseteq 1{:}d$ and $\bsk \in \NN_0^{|u|}$, we consider the sum of our ANOVA components $\beta_{u,\bsk'}$ over all $\bsk' \le \bsk$ (component-wise). Using the established equivalence $\beta_{u,\bsk'}(\bsx_u) = \beta'_{\bsl(u,\bsk')}(\bsx)$, where $\bsl(u,\bsk')$ is defined such that $l_j = k'_j+1$ for $j \in u$ and $l_j=0$ otherwise, we have
\begin{align*}
\sum_{\substack{\bsk' \in \NN_0^{|u|} \\ \bsk' \le \bsk}} \beta_{u,\bsk'}(\bsx_u)
&= \sum_{\substack{\bsk' \in \NN_0^{|u|} \\ \bsk' \le \bsk}} \beta'_{\bsl(u,\bsk')}(\bsx) \\
&= \sum_{\substack{\bsk' \in \NN_0^{|u|} \\ \bsk' \le \bsk}} \sum_{\bsh \in L_{\bsl(u,\bsk')}} \widehat{f}(\bsh) \wal_{\bsh}(\bsx).
\end{align*}
The union of the disjoint sets $L_{\bsl(u,\bsk')}$ over all $\bsk' \le \bsk$ covers exactly the set of all Walsh indices $\bsh$ where $h_j < b^{k_j+1}$ for all $j \in u$ and $h_j=0$ for $j \notin u$. This corresponds precisely to the index set we previously denoted as $\tilde{L}_{\bsl(u,\bsk),u}$. Therefore, we can combine the summations:
\begin{align*}
\sum_{\substack{\bsk' \in \NN_0^{|u|} \\ \bsk' \le \bsk}} \sum_{\bsh \in L_{\bsl(u,\bsk')}} \widehat{f}(\bsh) \wal_{\bsh}(\bsx)
= \sum_{\bsh \in \tilde{L}_{\bsl(u,\bsk),u}} \widehat{f}(\bsh) \wal_{\bsh}(\bsx) 
= \tilde{\beta}_{u,u,\bsk}(\bsx_u).
\end{align*}
The final equality follows directly from \eqref{eq:beta-betadash}. This confirms \eqref{eq:beta2}.

\section*{Acknowledgments}
The authors acknowledge the use of Google's Gemini 2.5 for editing and language polishing of the manuscript.

\bibliographystyle{siamplain}
\bibliography{ref.bib}

\end{document}